\newtheorem{theorem}{Theorem}[section]
\newtheorem{corollary}[theorem]{Corollary}
\newtheorem{definition}[theorem]{Definition}
\newtheorem{example}[theorem]{Example}
\newtheorem{lemma}[theorem]{Lemma}
\newtheorem{fact}[theorem]{Fact}
\newtheorem{proposition}[theorem]{Proposition}
\newtheorem{remark}[theorem]{Remark}
\newenvironment{proof}[1][Proof]{\noindent\textbf{#1.} }{\ \rule{0.5em}{0.5em}}
\begin{document}

\title{Representations of Leavitt Path Algebras}
\author{Ayten Ko\c{c} $^{*}\> $, $\>$  Murad  \"{O}zaydın$^{**}$\\ \\}




\maketitle

\begin{abstract}
We study representations of a Leavitt path algebra $L$ of a finitely separated digraph $\Gamma$ over a field. We show that the category of $L$-modules  is equivalent to a full subcategory of quiver representations. When $\Gamma$ is a (non-separated) row-finite digraph we determine all possible finite dimensional quotients of $L$ after giving a necessary and sufficient graph theoretic criterion for the existence of a nonzero finite dimensional quotient. This criterion  is also equivalent to $L$ having  UGN (Unbounded Generating Number) as well as being algebraically amenable. 
We also realize the category of $L$-modules as a retract, hence a quotient by an explicit Serre subcategory of the category of quiver representations (that is, $\mathbb{F}\Gamma$-modules)
via a new colimit model for $M\otimes_{\mathbb{F}\Gamma} L$.
\end{abstract}

\textbf{Keywords :} Leavitt path algebra, quiver representation, nonstable K-theory, dimension function, Serre subcategory, quotient category.


\section{Introduction}

The main goal of this article is to investigate the relationship between the module categories of Leavitt path algebras and the module categories of path algebras, and give a criterion for the existence of a nonzero finite dimensional module over Leavitt path algebras.\\


Even though we work with finitely separated digraphs and non-separated row-finite digraphs, to keep the discussion less technical we will restrict 
attention to non-separated finite digraphs in this introduction. Before describing the contents of each section below we provide some context, background and motivation.\\


Leavitt \cite{lea62} defined $L(1,n)$ as the $\mathbb{F}$-algebra generated by $ X_0 , \>  X_1,  \cdots,  \> X_{n-1}, $ $ Y_0, \>   Y_1,  \cdots,  \> Y_{n-1}$  subject to the relations $Y_iX_j=\delta_{ij}$ for $0\leq i ,\> j< n$  and $ X_0Y_0+ X_1 Y_1 +\cdots+ X_{n-1}Y_{n-1} =1$ where $\delta$ is the Kronecker delta. He proved that $L(1,n) $ is a simple algebra and $L(1,n) \cong L(1,n)^n$ but $L(1,n) \ncong L(1,n)^j$ for $j=2, \cdots , n-1$.
 The algebra $L(1,n)$ is the Leavitt path algebra of $R_n$, the rose with $n$ petals:
 $$   \xymatrix{ & {\bullet^v} \ar@(ur,dr) ^{e_0} \ar@(u,r) ^{e_1}
\ar@(ul,ur) ^{e_2}  \ar@(dr,dl) ^{e_{n-2}} \ar@(r,d) ^{e_{n-1}}
\ar@{}[l]^{\cdots} }$$

The Leavitt path algebra $L(\Gamma)$ of a di(rected )graph $\Gamma$ was defined (many decades after Leavitt's seminal work, via a detour through functional analysis) by Abrams, Aranda Pino \cite{aa05} and by Ara, Moreno, Pardo \cite{amp07} (independently and essentially simultaneously) as an algebraic analog of a graph $C^*$-algebra. It is a universal (Cohn) localization of the path algebra $\mathbb{F} \Gamma$ of the digraph $\Gamma$ \cite[Corollary 4.2]{ab10}. The excellent survey \cite{abr15} is the definitive reference for the history and development of Leavitt path algebras. We will give the precise definition of $L(\Gamma)$ in the next section.   \\

 A major theme in the theory of Leavitt path algebras is to establish a dictionary between the graph theoretic properties of $\Gamma$ and the algebraic structure of $L(\Gamma)$ (see \cite{abr15}, \cite{aas} and their references). In particular, in strictly ascending order of generality, for a finite digraph $\Gamma$ it is known that:
 
  \begin{enumerate}
\item $L(\Gamma)$ has DCC (Descending Chain Condition) on right (or left) ideals \cite[Theorem 2.6]{aaps}  if and only if $\Gamma$ is acyclic (that is, $\Gamma$ has no directed cycles) if and only if $L(\Gamma)$ is von Neuman regular \cite[Theorem 1]{ar10} if and only if $L(\Gamma)$ is finite dimensional if and only if $L(\Gamma)$ is isomorphic to a direct sum of matrix algebras (over the ground field $\mathbb{F}$) \cite[Corollaries 3.6 and 3.7]{aas07}. 
\item $L(\Gamma)$ has ACC (Ascending Chain Condition) on right (or left) ideals \cite[Theorem 3.8]{aaps} if and only if the cycles of $\Gamma$ have no exits if and only if $L(\Gamma)$ is \textit{locally finite dimensional} (i.e., a graded algebra with each homogeneous summand 
being finite dimensional) if and only if $L(\Gamma)$ is a principal ideal ring \cite[Proposition 17]{amp17} in which case $L(\Gamma)$ is isomorphic to a direct sum of matrix algebras over $\mathbb{F} $ and/or matrix algebras over $\mathbb{F}[x,x^{-1}]$ (the Laurent polynomial algebra) \cite[Theorems 3.8 and 3.10]{aas08}.   
\item $L(\Gamma)$ has finite GK (Gelfand-Kirillov) dimension, equivalently $L(\Gamma)$ has polynomial growth if and only if the cycles in $\Gamma$ are mutually disjoint \cite[Theorem 5]{aajz12} if and only if all simple $L(\Gamma)$-modules are finitely presented \cite[Theorem 4.5]{ar14}. \\

In fact (i) and (ii) are special cases of (iii): $\Gamma$ is acyclic if and only if the GK dimension of $L(\Gamma)$ is 0. The digraph $\Gamma$ has a cycle but the cycles of $\Gamma$ have no exits if and only if the GK dimension of $L(\Gamma)$ is 1. The first instance of $L(\Gamma)$ with GK dimension $>1$ is given by the Toeplitz digraph\\\\
$ \Gamma: \qquad  \xymatrix{  \> \> { \bullet}
\ar@(ul,ur) } \xymatrix{ \longrightarrow  { \bullet} } \quad $  (\cite{jac50}, \cite{aajz13}, see Example \ref{Toeplitz} below). \\

We can add the following to this list:
 
 \item $L(\Gamma)$ has a nonzero finite dimensional quotient if and only if  $\Gamma$ has a sink or a cycle such that there is no path from any other cycle to it (Theorem \ref{maximal}) if and only if $L(\Gamma)$ has UGN (Unbounded Generating Number) \cite[Theorem 3.16]{anp} 
 if and only if $L(\Gamma)\oplus L(\Gamma)$ is not a quotient of $L(\Gamma)$ (Corollary \ref{ugn}).\\
 
 Our Corollary \ref{reversible} states: If $L(\Gamma)$ has finite Gelfand-Kirillov dimension then  $L(\Gamma)$ has a nonzero finite dimensional quotient and if $L(\Gamma)$ has a nonzero finite dimensional quotient then $L(\Gamma)$ has IBN. Neither of these implications is reversible.\\
 
 \end{enumerate}

 
 
 Here is a summary of the contents of the rest of this paper: We review the relevant definitions and basic facts in the next section. In section 3, we work in the category  of unital modules over the Leavitt path algebra of a finitely separated digraph $\Gamma$. After observing that this category is equivalent to a subcategory of the category of quiver representations of $\Gamma$ 
  (Proposition \ref{teorem}) we illustrate this point of view with several propositions and examples in sections 3 and 5, providing new proofs of slight extensions of some basic results. (Such as Propositions \ref{loc} and \ref{1-1} below.) Also, in Example \ref{Toeplitz} we give a short proof of the non-splitting theorem in \cite{aajz13}. \\
  
  In section 3 we also give a necessary and sufficient criterion for the existence of a nonzero finite dimensional quotient in terms of dimension functions (Corollary \ref{cor}). While this criterion is still difficult to check in the generality of separated digraphs, in the non-separated case it is equivalent to the existence of a sink or a cycle to which only finitely many vertices can connect but no other cycle. \\

  In section 4 we collect a few definitions and facts needed later. We also reinterpret the criterion for the existence of a nonzero finite dimensional representation in terms of the nonstable $K$-theory of the Leavitt path algebra. \\
  
  From section 5 on we focus on non-separated digraphs. Now finitely separated is the same as row-finite, that is,  no vertex may emit infinitely many arrows. We give an explicit graded projective resolution of $L(\Gamma)$ as an $\mathbb{F}\Gamma$-module in Fact \ref{resolution}.
 The rest of this section is mostly about slight extensions of important examples of  $L(\Gamma)$-modules and their properties considered from our viewpoint.\\
  
  In section 6 we determine all possible finite dimensional quotients of $L(\Gamma)$ for a row-finite digraph $\Gamma$:  Any finite dimensional quotient of $L(\Gamma)$ is isomorphic to $ \oplus M_{n_k} (B_k)$ where the sum is over maximal sinks and maximal cycles with finitely many predecessors in $\Gamma$ and $n_k$ is the number of paths in $\Gamma$ terminating at the relevant sink or at a chosen vertex on the relevant cycle. The cyclic algebra $B_k$ is $\mathbb{F}[x] / (P_k(x))$ with $P_k(0)=1$. If $k$ corresponds to a sink then  $B_k=\mathbb{F}$ if this sink is in the support of $M$, otherwise $B_k=0$ (Theorem \ref{teo1}). Theorem \ref{maximal} states that $L(\Gamma)$ has a nonzero finite 
  dimensional quotient if and only if $\Gamma$ has a maximal sink or a maximal cycle. This criterion (for a finite $\Gamma$) is equivalent to the one
  given in \cite[Theorem 3.16]{anp} for $L(\Gamma)$ to have UGN. In Corollary \ref{ugn} we give a short proof of a generalization: For a Leavitt path algebra not having UGN is equivalent to $L(\Gamma)\oplus L(\Gamma)$ being a quotient of $L(\Gamma)$ (Corollary \ref{ugn}).\\

 Section 7 is about the relationship of the module categories $\mathfrak{M}_{\mathbb{F} \Gamma}$ and $\mathfrak{M}_{L( \Gamma)}$ of the path algebra $\mathbb{F} \Gamma$ and the Leavitt path algebra $L(\Gamma)$. We know from Proposition \ref{teorem} that $\mathfrak{M}_{L( \Gamma)}$ is (essentially) 
 a full subcategory of $\mathfrak{M}_{\mathbb{F} \Gamma}$. In fact $L(\Gamma)$ is a retract (or a summand) of $\mathfrak{M}_{\mathbb{F} \Gamma}$ via the functor $\underline{\>\>\>}\otimes_{\mathbb{F} \Gamma} L(\Gamma)$ by Theorem \ref{forgetful}. We can also realize 
 $\mathfrak{M}_{L( \Gamma)}$ as a localization/quotient of $\mathfrak{M}_{\mathbb{F} \Gamma}$. We identify the relevant Serre subcategory   
$\widehat{\mathfrak{M}}_{\Gamma} $ and show that $\mathfrak{M}_{\mathbb{F}\Gamma}/ \widehat{\mathfrak{M}}_{\Gamma} $ is equivalent to $\mathfrak{M}_{L(\Gamma)}$ (Theorem \ref{Serre}). The proof uses a new realization of $M \otimes L(\Gamma)$ as a direct limit. The fact that $L(\Gamma)$ is a flat 
$\mathbb{F} \Gamma$-module is also relevant. This was proven in \cite{ab10} for a finite $\Gamma$, we give a short proof using the direct limit model for $M \otimes L(\Gamma)$ in Lemma \ref{flat}. 
 \section{Preliminaries}




A {\em di}({\em rected} ){\em graph} $\Gamma$ is a four-tuple $(V,E,s,t)$ where $V$ is the set of vertices, $E$ is the set of arrows, $s$ and $t:E \longrightarrow V$ are the source and  the target functions. The digraph $\Gamma$ is \textit{finite} if $E$ and $V$ are both finite. $\Gamma$ is \textit{row-finite} if $s^{-1}(v)$ is finite for all $v$ in $V$. Given $V' \subseteq V$ the {\em induced subgraph} on $V'$ is $\Gamma':= (V',E',s',t')$ with $E':=  s^{-1}(V') \cap t^{-1}(V')$ ; $\> s':= s\vert_{E'}$ ; $\> t':= t \vert_{E'}$. A subgraph is {\em full} if it is  the induced subgraph on its vertices.\\

A vertex $v$ in $V$ is a \textit{sink} if $s^{-1}(v)= \emptyset$; it is a \textit{source} 
if $t^{-1}(v)= \emptyset$. An isolated vertex is both a source and a sink. 
If $t(e)=s(e)$ then $e$ is  \textit{a loop}. 
A path of length $n>0$ is a sequence $p =e_{1}\ldots e_{n}$ such
that $t(e_{i})=s(e_{i+1})$ for $i=1,\ldots ,n-1.$ The source of $p$ is $s(p
):=s(e_{1})$  and the target of $p$ is $t(p ):=t(e_{n})$. A path $p$ of length 0 consists of a single vertex $v$ where $s(p) :=v $ and $t(p) := v$. We will denote the length of $p$ by $l(p)$.  A path $C=e_1e_2 \cdots e_n$ with $n>0$ is a \textit{cycle} if $s(C )=t(C )$ and $s(e_{i})\neq s(e_{j})$ for $i\neq j$. An arrow $e \in E$ is an \textit{exit} of the cycle $C=e_1e_2 \cdots e_n$ if there is an $i$ such that $ s(e)=s(e_i)$ but $e\neq e_i$. The digraph $\Gamma$ is \textit{acyclic} if it has no cycles. An \textit{infinite path} is an infinite sequence of arrows $e_1e_2e_3 \cdots$ such that  $t(e_k)=s(e_{k+1})$ for $k=1,2,3, \cdots$.

 \begin{remark}
 A digraph is also called an "oriented graph" in graph theory, a "diagram" in topology and category theory,  a "quiver" in representation theory, usually just a "graph" in $C^*$-algebras and Leavitt path algebras. The notation above for a digraph is standard in graph theory. However $Q=(Q_0,Q_1, s,t)$ is more common in quiver representations while $E=(E^0, E^1,s,r)$ is mostly used in graph $C^*$-algebras and in Leavitt path algebras. We prefer the graph theory notation which involves two more letters but no subscripts or superscripts. As in quiver representations we view $\Gamma$ as a small category, so "arrow" is preferable to "edge", similarly for "target" versus "range".
 \end{remark}

There is a preorder defined on the set of sinks and cycles in $\Gamma$: we say that a cycle $C$ \textbf{connects to} a sink $w$  denoted by $C \leadsto w$ if there is a path from $C$ to $w$. Similarly $C\leadsto D$  if there is a path from the cycle C to the cycle D. This is a partial order if and only if the cycles in $\Gamma$ are mutually disjoint. A cycle is \textbf{minimal} with respect to $\leadsto$ if and only if it has no exit (sinks are always minimal). A cycle $C$ is \textbf{maximal} if no other cycle connects to $C$ (in particular, a maximal cycle is disjoint from all other cycles). A sink $w$ is maximal if there is no cycle $C$ which connects to $w$. \\

Given a digraph $\Gamma,$ the \textit{extended digraph} of $\Gamma$ is $\tilde{\Gamma} := (V,E \sqcup E^*, s, t)$ where $E^* :=\{e^*~|~ e\in E \}$ and the functions $s$ and $t$ are  extended as $s(e^{\ast}):=t(e), \> t(e^{\ast }):=s(e) $ for all $e \in E$. Thus the dual arrow $e^*$ has the opposite orientation of $e$. We want to extend $*$ to an operator defined on all paths of $\tilde{\Gamma}$: Let $v^*:=v$ for all $v$ in $V$, $(e^*)^*:=e $ for all $e$ in $E$ and $p^*:= e_n^* \ldots e_1^*$ for a path $p=e_1 \ldots e_n$ with $e_1, \ldots , e_n $ in $E \sqcup E^* $. In particular $*$ is an involution, i.e., $**=id$. \\

A {\em separated digraph} is a pair $(\Gamma, \Pi)$ where $\Gamma =(V,E,s,t)$ is a digraph and $\Pi$  is a partition of $E$ finer than $\left\{ s^{-1}(v): v \in V ~~\text{with} ~s^{-1}(v)\neq \emptyset \right\} $. That is, if $e$ and $f$ are in $X\in \Pi$ then $s(e)=s(f)$. Hence the induced source function $s : \Pi \longrightarrow V$ is well-defined. We will also denote by $X$ the function $E \rightarrow \Pi$ assigning to each arrow $e$ the unique part $X \in \Pi$ containing $e$.\\

If $\Gamma'=(V',E') $ is a subgraph of the separated digraph $\Gamma$ then $\Gamma'$ is also a separated digraph with the separation $\Pi':=\{ X\cap E' \> \vert \> \>  X \in \Pi \>, \> X \cap E'\neq \emptyset \}$. A separated digraph is \textit{finitely separated }if $X$ is finite for all $X$ in $\Pi$. 
Clearly, a subgraph of a finitely separated digraph is also finitely separated. For a non-separated digraph finitely separated is the same as row-finite, that is, $s^{-1}(v)$ is finite for every vertex $v$. 


The {\em Leavitt path algebra of a separated digraph} $(\Gamma, \Pi )$ with coefficients in the field $\mathbb{F}$, as defined in \cite{ag12}, is the $\mathbb{F}$-algebra $L_{\mathbb{F}}(\Gamma , \Pi)$ generated by $V \sqcup E \sqcup E^*$ satisfying:\\
\indent(V)  $\quad \quad \quad ~ vw ~=~ \delta_{v,w}v$ for all $v, w \in V ,$ \\
\indent($E$)  $\quad \quad \quad s ( e ) e  = e=e t( e)  $ for all $e \in E\sqcup E^*$, \\
\indent(SCK1) $\quad ~e^*f ~ = ~ \delta_{e,f} ~t(e)$ for all $e,f\in X$ and all $X\in \Pi $, \\
\indent(SCK2) $\quad ~s (X)~ = ~ \sum_{e\in X} ee^*$  for every finite $X \in \Pi$\\
where $\delta$ is the Kronecker delta. \\


We will usually suppress the subscript $\mathbb{F}$ when we denote our algebras. When $\Gamma$ or $\Pi$ are clear from the context we may also omit these from our notation. We will also abbreviate $s(e)$, $t(e)$ and $s(X)$ etc., as $se$, $te$ and $sX$ to reduce notational clutter.\\

The relations (V) simply state that the vertices are mutually orthogonal idempotents. If we only impose the relations (V) and ($E$) then we obtain $\mathbb{F}\tilde{\Gamma}$, the \textit{path }(or \textit{quiver}) \textit{algebra} of the extended digraph $\tilde{\Gamma}$ : The paths in $\tilde{\Gamma}$ form a vector space basis of $\mathbb{F} \tilde{\Gamma}$, the product $pq$ of two paths $p$ and $q$ is their concatenation if $t p=s q$ and 0 otherwise.  We get the\textit{ Cohn path algebra} $C(\Gamma,\Pi)$ \textit{of the separated digraph} $(\Gamma,\Pi)$ when we impose the relations (SCK1) in addition to (V) and ($E$). Hence $L(\Gamma,\Pi)$ is a quotient of $C(\Gamma,\Pi)$, which is a quotient of $ \mathbb{F}  \tilde{\Gamma}$. The abbreviation SCK stands for Separated Cuntz-Krieger. \\


Note that $L_{\mathbb{F}}(\Gamma)$ is not a quotient of the polynomial algebra in the noncommuting variables $V \sqcup E \sqcup E^*$ because we need to consider the algebra of polynomials without a constant term. In particular, when $\Gamma$ is a single vertex $v$ with no arrows then $L_{\mathbb{F}}(\Gamma) = \mathbb{F}v \cong \mathbb{F}$ not $\mathbb{F} \oplus \mathbb{F}v$ (Similarly for the path algebra  and also for the Cohn path algebra).   \\

The algebras $\mathbb{F} \Gamma$, $ \mathbb{F}\tilde{\Gamma}$, $C(\Gamma, \Pi)$ and $L(\Gamma, \Pi)$ have 1 if and only if $V$ is finite, in which case the sum of all the vertices is the unit: It is clear that $\sum_{v \in V} v=1$ when $V$ is finite. For the converse, a given element in any these algebras is a finite linear combination of paths in $\tilde{\Gamma}$ and we can pick $v \in V$ which is not the source of any of these paths if $V$ is infinite. Now left multiplication by $v$ gives zero, so there is no unit element in any of these algebras, since Proposition \ref{1-1} below shows that $v\neq 0$ in $L(\Gamma, \Pi)$ for every $v \in V$, hence also in $C(\Gamma, \Pi)$.\\ 

When the vertex set $V$ is infinite, $L$ does not have 1 but $L$ has \textit{local units}: a commuting set of idempotents $\Omega$ such that for all $a, b$ in the ring there is a $\omega$ in $\Omega$ with $\omega a=a=a \omega $  and $\omega b=b=b\omega$. In $L$, as well as in $\mathbb{F}\Gamma$ and $C(\Gamma, \Pi)$, we may take $\Omega$ to be all finite sums of vertices. We call a ring homomorphism $\varphi: R\longrightarrow S$ \textit{unital} if $R$ has a set of local units $\Omega$ with $\varphi (\Omega)$ being a set of local units in $S$. With this definition the natural homomorphisms between $\mathbb{F}\Gamma$, $ C(\Gamma, \Pi)$ and $L$ are all unital.\\

When the ring has a 1 this definition of a unital ring homomorphism is equivalent to $\varphi(1)=1$ in $S$: If $R$ has 1 then there is a $\omega \in \Omega$ with $1=\omega 1=\omega \in \Omega$, hence $1\in \Omega$. If $\varphi$ is unital then there is $\omega \in \Omega$ with $\varphi (\omega)=1$ since $1 \in \varphi(\omega)$. Now $\varphi(1) =\varphi(1)1=\varphi (1)\omega =\varphi(\omega) =1$. Hence our definition of a unital ring homomorphism implies the standard definition when $R$ and $S$ have 1. For the converse we may take $\Omega =\{ 1 \}$. \\

If an algebra $A$ does not have 1 then the ideal $I$ generated by $U \subseteq A$  is  $\mathbb{F} U+ AU + UA+ AUA $, in general.
If $A$ has local units then this simplifies to $I=AUA $.\\

There is a $\mathbb{Z}$-grading on $ \mathbb{F}  \tilde{\Gamma}$ and all the other algebras above given by $\lvert v \vert =0$ for $v$ in $V$, $\lvert e\rvert =1$ and $\lvert e^* \rvert =-1$ for $e$ in $E$. This defines a grading on all our algebras since all the relations are homogeneous. The linear extension of $*$ on paths induces a grade-reversing involutive anti-automorphism (i.e., $\lvert \alpha^*\rvert =- \lvert \alpha \rvert $ and $(\alpha \beta)^*=\beta^* \alpha^*$). Hence these algebras are $\mathbb{Z}$-graded $*$-algebras and the (graded) categories of left modules and right modules for any of these algebras are equivalent.\\

More generally, we may consider $G$-gradings on $L(\Gamma, \Pi)$ for any group $G$, with $V \sqcup E$ being homogeneous. 

\begin{lemma} \label{hom}
If $G$ is a group and $\lvert \> \rvert$ is a $G$-grading on $L(\Gamma, \Pi)$ such that all vertices $v$ in $V$ and all arrows $e$ in $E$ are homogeneous then all $e^*$ in $E^*$ are also homogeneous with $\lvert e^* \rvert =\lvert e \rvert ^{-1}$.
\end{lemma}
\begin{proof}
We see that $\lvert v \lvert =1$ for all $v$ in $V$ since $v^2=v$. Let $e^*=\sum e^*_g$ where $e^*_g$ is the homogeneous component of $e^*$ of grade $g$. Hence $e^*_g e=0$ if $g\neq \lvert e \rvert^{-1}$ since $e^*e=te$ and $\vert te \vert =1 $. 
Similarly, if $e \in X$ then $e^*_gf=0$ for $e \neq f \in X$. 
 Thus $e_g^*sX=e_g^* \sum_{f \in X} ff^*=0$ if $g \neq  \lvert e \rvert ^{-1}$. Also $e_g^*v=0$ if $v\neq sX=se$ because $e^*v=0$. Since $e_g^* v=0$ for all $v \in V$ when $g\neq  \lvert e \rvert ^{-1}$ we see that $e^*$ is homogeneous and $ \lvert e ^*\rvert =\lvert  e \rvert ^{-1}$.
\end{proof} \\

Consequently, any function from $ E $ to $G$ defines a unique $G$-grading on $L(\Gamma, \Pi)$ with  $\lvert v \rvert =1$ for all $v \in V$ and  $\lvert e^* \rvert = \lvert e \rvert ^{-1}$ since the relations (CK1) and (CK2) are homogeneous. A morphism (or a refinement) from a $G$-grading to an $H$-grading on the algebra $A$ is given by a group homomorphism $\phi:G \longrightarrow H$ such that for all $h \in H$, $A_h= \oplus_{\phi (g)=h} A_g$ where $A_g:= \{a \in A  :\lvert a\rvert_{_{G}}=g \} \cup \{0\} $.
 There is a universal $G$-grading on $L(\Gamma, \Pi)$ and $C(\Gamma, \Pi)$ which is a refinement of all others:

\begin{proposition}
Let $G:= F_{E}$ be the free group on the set of arrows. The $G$-grading defined by $\vert v\vert_{_G}=1$ and $\vert e\vert_{_G}= e$  is an initial (universal) object in the category of $G$-gradings of 
$\mathbb{F}\Gamma$ or $L(\Gamma, \Pi)$ or $C(\Gamma, \Pi)$ with $V \sqcup E$ being homogeneous.
\end{proposition}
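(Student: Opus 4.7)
My plan is to exploit the universal property of the free group $F_E$. First, I would check that the asserted $F_E$-grading really is well defined on both $L(\Gamma,\Pi)$ and $C(\Gamma,\Pi)$: conditions (i) and (ii) stated immediately before the proposition hold by inspection, and the defining relations (V), (E), (SCK1) and (SCK2) are homogeneous in this $F_E$-degree --- for instance, in (SCK2) every summand $ee^*$ has $F_E$-degree $1 = |s(X)|_{F_E}$, and in (SCK1) the term $e^*f$ has degree $e^{-1}f$, which equals $1 = |t(e)|_{F_E}$ precisely when $e=f$, the only case in which the right-hand side is nonzero.

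Given any $H$-grading of the same algebra with $V \sqcup E \sqcup E^*$ homogeneous, the universal property of the free group $F_E$ would produce a unique group homomorphism $\phi: F_E \to H$ with $\phi(e) := |e|_H$ for each $e \in E$. I would then verify compatibility on the three types of generators: $\phi(|v|_{F_E}) = \phi(1) = 1_H = |v|_H$ by condition (i); $\phi(|e|_{F_E}) = \phi(e) = |e|_H$ by construction; and $\phi(|e^*|_{F_E}) = \phi(e^{-1}) = |e|_H^{-1} = |e^*|_H$ by condition (ii). Writing $A$ for either $L(\Gamma,\Pi)$ or $C(\Gamma,\Pi)$, the algebra is spanned by products of generators, so any such product $x_1 \cdots x_n$ satisfies $|x_1 \cdots x_n|_H = \phi(|x_1 \cdots x_n|_{F_E})$; consequently every element of the $F_E$-homogeneous component $A_g^{(F_E)}$ lies in the $H$-component $A_{\phi(g)}^{(H)}$. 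Comparing with the direct-sum decompositions $A = \bigoplus_{g} A_g^{(F_E)} = \bigoplus_{h} A_h^{(H)}$ then forces $A_h^{(H)} = \bigoplus_{\phi(g) = h} A_g^{(F_E)}$ for every $h \in H$, which is exactly the refinement condition stated in the excerpt.

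Uniqueness of $\phi$ is immediate: any refinement $\phi': F_E \to H$ must send each free generator $e \in E$ to $|e|_H$, and a homomorphism out of a free group is determined by its values on a free generating set. I expect no substantial technical obstacle here; the argument is essentially the universal property of $F_E$ wrapped around the observation that a grading with homogeneous generators is determined on those generators, which is precisely the feature that makes the $F_E$-grading initial.
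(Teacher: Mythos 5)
Your proof is correct and takes essentially the same approach as the paper: the paper's one-line proof just defines $\phi: F_E \to H$ by $\phi(e) = |e|_H$, which is exactly the map you construct via the universal property of the free group. You have simply spelled out the verification steps that the paper leaves implicit.
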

\begin{proof}
For any $H$-grading let $\phi: G \rightarrow H$ be the homomorphism given by $\phi(e)= \vert e\vert_{_H}$.
\end{proof}\\

Combined with the existence of certain representations of $L(\Gamma, \Pi)$ defined in the next section this universal grading is useful in showing that some elements of $L(\Gamma, \Pi)$ are nonzero 
(or linearly independent) as in Proposition \ref{1-1} below.\\

When $\Pi=\left\{ s^{-1}(v) \mid v \in V,    s^{-1}(v)\neq \emptyset \right\}$, we say that $\Gamma$ is not separated, $C(\Gamma, \Pi)$ is denoted by $C(\Gamma)$ and called the \textit{Cohn path algebra of} $\Gamma $. Similarly $L(\Gamma,\Pi)$ is denoted by $L(\Gamma)$ and called the \textit{Leavitt path algebra of} $\Gamma$.  Also the conditions (SCK1) and (SCK2) are denoted by (CK1) and (CK2) respectively \cite{aa05}, \cite{amp07}. Since $e^*f=e^*(se)(sf)f$ by ($E$), using ($V$) the relation (SCK1) is shortened to: (CK1) $e^*f =\delta_{e,f} te$ for all $e,f \in E$. \\

A subset $H$ of $V$ is {\em hereditary} if for any path $p$, $sp \in H$ implies that $tp \in H$ \cite{aa05}; $H$ is {\em $\Pi$-saturated} if $\{ te : e \in X \} \subseteq H$ for some (finite) $X\in \Pi$ implies that $sX \in H$ \cite{ag12}. If $I$ is an ideal of $L(\Gamma,\Pi)$ and $p$ is a path in $\Gamma$ with $sp \in I$ then $tp=p^*p=p^*(sp)p \in I$, also if $\{ te : e \in X \} \subseteq I$ then $sX=\sum_{e \in X}  ee^*=\sum_{e\in X}  e(te)e^* \in I$, so $I\cap V$ is hereditary and $\Pi$-saturated. We have a Galois connection between the subsets of $V$ and the ideals of $L(\Gamma, \Pi)$ given by $S \mapsto (S)$ and $I \mapsto I\cap V$ which gives a bijection between hereditary saturated subsets of $V$ and graded ideals of $L(\Gamma)$ when $\Gamma$ is a (non-separated) row-finite digraph \cite[Theorem 5.3]{amp07}. 

 \section{Quiver representations and $L(\Gamma, \Pi)$-modules}
We will work in the category $\mathfrak{M}_{L }$ of unital (right) modules over $L:=L_{\mathbb{F}} (\Gamma, \Pi)$. However $L$ has a 1 if and only if the vertex set $V$ is finite. Even if $V$ is infinite, we define a unital $L$-module as a module $M$ with the property that $ML=M$, i.e., for any $m$ in $M$ we can find $\lambda_1, \lambda_2, \ldots , \lambda_n$ in $L$ and $m_1, m_2, \ldots, m_n$ in $M$ so that $m=m_1\lambda_1+m_2 \lambda_2 +\cdots + m_n\lambda_n$. This condition is equivalent to the standard definition of unital (when $L$ has a 1) since $m1=(m_1\lambda_1+m_2 \lambda_2 +\cdots + m_n\lambda_n)1=m_1\lambda_1 1 +m_2 \lambda_2 1+\cdots + m_n\lambda_n1=m_1\lambda_1+m_2 \lambda_2 +\cdots + m_n\lambda_n=m$. The category of unital modules is an abelian category with sums since it is closed under taking quotients, submodules, extensions, (arbitrary) sums (but not infinite products: if $V$ is infinite then the $L$-module $L^V$ is not unital).\\

We will need the following consequence of (SCK1) and (SCK2):

\begin{lemma} \label{lemma} Let $\Gamma$ be a finitely separated digraph and $M$ be a right $L$-module. Then $(\mu_e)_{e \in X}: MsX \longrightarrow  \oplus_{e \in X} Mte$ for all $X \in \Pi$ is an isomorphism (of vector spaces), where $\mu_e$ is right multiplication by $e$. Moreover, if $M$ is unital then $M = \oplus_{v \in V} Mv$ (as a vector space) and $MsX=\oplus_{e \in X}Mee^*$.
\end{lemma}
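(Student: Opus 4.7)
The plan is to verify the first (main) statement by explicit construction of an inverse and then deduce the second statement from the orthogonality of the relevant idempotents.

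First, I would check that $(\mu_e)_{e\in X}$ is well-defined and lands in $\bigoplus_{e\in X} Mte$. Since $e = et(e)$ by (E), for any $m \in MsX$ we have $me = met(e) \in Mte$. Note that because $\Gamma$ is finitely separated, $X$ is a finite set, so the codomain is an honest finite direct sum and (SCK2) is available. Next I would construct an explicit candidate inverse $\sigma:\bigoplus_{e\in X} Mte \to MsX$ by $\sigma\bigl((m_e)_{e\in X}\bigr) = \sum_{e\in X} m_e e^*$. I would verify this lands in $MsX$: since $t(e^*) = s(e) = s(X)$ for every $e\in X$, we have $m_e e^* = m_e e^* s(X) \in M\,sX$.

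Then I would check $\mu\circ\sigma = \mathrm{id}$ and $\sigma\circ\mu = \mathrm{id}$. For the first, for any $f\in X$,
\[
\Bigl(\sum_{e\in X} m_e e^*\Bigr)f \;=\; \sum_{e\in X} m_e (e^*f) \;=\; m_f\, t(f) \;=\; m_f,
\]
using (SCK1) and $m_f\in Mtf$. For the second, for any $m\in MsX$,
\[
\sigma\bigl((me)_{e\in X}\bigr) \;=\; \sum_{e\in X} m e e^* \;=\; m\sum_{e\in X} ee^* \;=\; m\, sX \;=\; m,
\]
using (SCK2) — this is the single place where the finiteness of $X$ is essential, which is why the hypothesis of finite separation enters.

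For the second statement, I would first observe that the family $\{v\}_{v\in V}$ consists of mutually orthogonal idempotents by (V), so $\sum_{v\in V} Mv$ is automatically a direct sum (projecting onto the $w$-component is right multiplication by $w$). The remaining content is that this sum fills $M$ when $M$ is unital. For this, any $m\in M$ can be written $m=\sum_i m_i \lambda_i$ with $\lambda_i \in L$, and each $\lambda_i$ is an $\mathbb{F}$-linear combination of products of generators of the form $pq^*$; since $pq^* = s(p)\,pq^*\,s(q)$ by (E), each summand $m_i\lambda_i$ lies in $\sum_{v\in V} Mv$. Finally, for $MsX = \bigoplus_{e\in X} Mee^*$, I would note that $\{ee^*\}_{e\in X}$ is a set of mutually orthogonal idempotents: $(ee^*)(ee^*) = e(e^*e)e^* = e\cdot te\cdot e^* = ee^*$ and $(ee^*)(ff^*) = e(e^*f)f^* = 0$ for $e\neq f$ in $X$ by (SCK1). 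Combined with (SCK2), $sX = \sum_{e\in X} ee^*$, this immediately gives $MsX = \sum_{e\in X} Mee^*$ with the sum direct.

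The proof is largely mechanical once one sees that the inverse is the obvious "star" map; no step is genuinely hard. The only point requiring care is the bookkeeping with the three hypotheses: (SCK1) is used for orthogonality and for $\mu\circ\sigma = \mathrm{id}$; (SCK2) plus finite separation is used for $\sigma\circ\mu = \mathrm{id}$ and for the second equality; and the unital hypothesis is needed \emph{only} for $M = \bigoplus_{v\in V} Mv$ (to rule out cases like $M = L^V$ with $V$ infinite).
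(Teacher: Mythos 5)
Your proof is correct and takes essentially the same approach as the paper's: the same candidate inverse $\sigma = [\mu_{e^*}]_{e\in X}$, the same invocations of (SCK1) and (SCK2) for the two composites, and the same orthogonality-of-idempotents reasoning for the directness of $\sum_v Mv$ and $\sum_{e\in X} Mee^*$. One cosmetic caveat: for a \emph{separated} digraph it is not automatic that every element of $L$ is a combination of monomials of the special form $pq^*$ with $p,q$ paths of $\Gamma$ (that reduction uses the non-separated (CK1)); however your argument only needs that $L$ is spanned by words in the generators --- i.e.\ paths in $\tilde\Gamma$ --- and each such word $w$ satisfies $w = w\cdot t(w)$ with $t(w)\in V$, so $M=\sum_{v\in V}Mv$ still follows.
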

\begin{proof}
To see that the inverse of $(\mu_e)_{e \in X}$ is $[\mu_{e^*}]_{e \in X}: \oplus_{e \in X} Mte \longrightarrow MsX$ given by $(m_e)_{_{e \in X}} [\mu_{e^*}]_{_{e \in X}}=  \sum_{e \in X} m_e e^*$ we check their compositions:\\
$$(m_e)_{_{e \in X}} [\mu_{e^*}]_{_{e \in X}}(\mu_f)_{_{f \in X}}=( \sum_{e \in X} m_e e^*)(\mu_f)_{_{f \in X}}=( \sum_{e \in X} m_e e^*f )_{_{f \in X}}=\big(  m_f \big)_{_{f \in X}}$$
where the last equality uses the relation $e^*f=\delta_{_{e,f}}tf $ and $m_f ( tf) =m_f$ since $m_f \in Mtf$. Also for $m \in MsX$ we get 

$$m(\mu_e)_{_{e \in X}}[\mu_{e^*}]_{_{e \in X}}=\big( me\big)_{_{e \in X}}[\mu_{e^*}]_{_{e \in X}}=\sum_{e \in X} mee^*=msX=m .$$

When $M$ is unital for any $m$ in $M$ we have $m=\sum_{k=1}^n m_k  \alpha_k = \sum_{j=1}^l m_j' v_j $ for some vertices $v_1, \cdots, v_l \in V$. Hence $M=\sum_{v \in V} Mv$. This sum is direct: For any finite set $\Lambda$ of vertices with $v \notin \Lambda$ if $m \in Mv \cap \sum_{w \in \Lambda} Mw$ then $m=mv$  (since $m \in Mv$) but $mv=0$ (since $wv=0$ for each $w \in \Lambda$). Thus $Mv \cap \sum_{w \in \Lambda } Mw=0$.\\

 $MsX=\sum_{e \in X} Mee^*$ since $sX=\sum_{e \in X}ee^*$. If $ e \neq f$ in $X$ then $ee^*ff^*=e(e^*f)f^* = 0$. Also $ee^*ee^*=ee^*$. That is, $MsX=\oplus_{e \in X}Mee^*$. 
 \end{proof}\\
 
Consequently, the linear transformation defined by right multiplication with any $e$ in $E$ from $Mse$ to $Mte$  is onto. Hence right multiplication with any path $p$ from $Msp$ to $Mtp$ is also onto.  
Similarly, right multiplication with $p^*$ is injective.\\


We want to view the category $\mathfrak{M}_{L}$ as a subcategory of $\mathfrak{M}_{\mathbb{F}\Gamma}$, the category of unital modules over the path algebra $\mathbb{F}\Gamma$, or equivalently, the category of quiver representations of $\Gamma$. The category of quiver representations of $\Gamma$ is the category of functors from the path category of the digraph $\Gamma$ (whose objects are the vertices $V$ and the morphisms are the paths in $\Gamma$) to the category of $\mathbb{F}$-vector spaces. A morphism of quiver representations is a natural transformation between two such functors. That is, a quiver representation $\rho$ assigns a (possibly infinite dimensional) vector space $\rho(v)$ to each vertex $v$ and a linear transformation $\rho(e): \rho(se) \longrightarrow \rho(te)$ to each arrow $e$. A morphism of quiver representations $\varphi: \rho \longrightarrow \sigma$ is a family of linear transformations $\{ \varphi_v : \rho(v) \longrightarrow \sigma(v) \}_{ v \in V}$ such that $\forall e \in E $ the diagram 
$
\begin{array}{cc}
  \rho(se)&  \stackrel{\rho(e)}{\longrightarrow} \rho(te)   \\
  \varphi_{se}\downarrow &   \qquad \quad \downarrow \varphi_{te}   \\
  \sigma(se) &\stackrel{\sigma(e)}{\longrightarrow} \sigma(te)   
\end{array}$ commutes \cite{dw05}.\\\\

In Proposition \ref{teorem} below the hypothesis on $\Gamma$ of being finitely separated may be removed (even in the generality of Cohn-Leavitt path algebras of separated digraphs as defined in \cite{ag12}) at the cost of complicating condition (I). 
We will not pursue this generality here.

	\begin{proposition} \label{teorem}
If $\Gamma= (V,E,s,t,\Pi)$ is a finitely separated digraph then the category $\mathfrak{M}_{L}$
 is equivalent to the full subcategory of quiver representations $\rho$ of $\Gamma$ satisfying: \\

For all $X\in \Pi \> , \quad (\rho(e))_{e \in X}: \rho ( sX) \longrightarrow \bigoplus\limits_{e \in X} \rho(te) \textit{ is an isomorphism.} \quad (I)$
Similarly, the full subcategory of graded quiver representations (with respect to the standard $\mathbb{Z}$-grading where the grade of a  path is its length) satisfying (I) is equivalent to the category of graded unital $L$-modules (with respect to the standard $\mathbb{Z}$-grading). 

\end{proposition}
 \begin{proof}
 Given  a right $L$-module $M$ we define a quiver representation $\rho_{_M}$ as follows: $\rho_{_M} (v) = Mv$ and $\rho_{_M} (e): Mse \longrightarrow Mte$ is defined by $m(se)\rho_{_M} (e):= m(se)e=me(te)$. 
  By the first part of Lemma \ref{lemma} (I) is satisfied. If $\varphi :M \longrightarrow N$ is an $L$-module homomorphism then $\varphi_v$ is the linear transformation making the diagram $
\begin{array}{c}
  \rho_{_M}(v)=Mv  \hookrightarrow M   \\
  \qquad \qquad  \varphi_v\downarrow    \quad \quad \downarrow \varphi  \\
\rho_{_N}(v)=Nv \hookrightarrow N  
\end{array}$ commutative. This defines a homomorphism of quiver representations, i.e., $\rho_{_M}(e)\varphi_{te}= \varphi_{se} \rho_{_N}(e)$  because right multiplication by $e$ commutes with $\varphi$.\\

Given a quiver representation $\rho$ we define (underlying vector space of) the corresponding module 
$M_\rho= \oplus_{_{v \in V}} \rho(v)$. To define $L$-module structure we will use the projections $p_v: \oplus_{_{w \in V}} \rho(w) \longrightarrow \rho(v)$, the inclusions $\iota_v : \rho(v) \longrightarrow \oplus_{_{w \in V}} \rho(w)$ for $v\in V$ and the projections $p_e:\oplus_{_{f \in X_e}} \rho(tf) \longrightarrow \rho(te)$, the inclusions $\iota_e: \rho(te) \hookrightarrow \oplus_{_{f \in X_e}} \rho (tf)$.  Now, let $mv:=mp_v\iota_v \> , \> \> me:= mp_{se} \rho(e) \iota_{te} $, $  me^*:= mp_{te}\iota_{e} (\rho(f))_{_{f \in X_e}}^{-1} \iota_{sX}$. It is routine (albeit tedious) to check that the defining relations of $L$ are satisfied.\\

Now let's check that the constructions above yield equivalences of categories. $M_{\rho_{_M}}:= \oplus_{_{v \in V}} Mv=M $ by Lemma \ref{lemma}, as vector spaces. It is easy to check that the $L$-module structures also match. Also, given a module homomorphism $\varphi : M \longrightarrow N$, we have $\varphi= \oplus_{_{v \in V}} \varphi_v: \oplus_{_{v \in V}} Mv \longrightarrow \oplus_{_{v \in V}} Nv$. For the composition in the other order  $\rho_{_{M_{\rho}}} (v):= M_{_{\rho}}v :=\big(\oplus_{_{w \in V}} \rho(w) \big)v=\rho(v)$ and $\rho (e)= \rho_{_{M_\rho}} (e): M_\rho se \longrightarrow M_\rho te$ because the diagram 
$$\begin{array}{c}
  M_\rho se = \rho(se)  \stackrel{\iota_{se}}{\hookrightarrow} M_\rho := \oplus_{_{w \in V}} \rho (w)    \\
  \qquad \qquad \quad \quad \rho(e) \downarrow     \qquad \qquad \qquad  \qquad   \downarrow p_{se} \rho (e) \iota_{te}   \\
  M_\rho te = \rho(te)  \stackrel{\iota_{te}}{\hookrightarrow} M_\rho := \oplus_{_{w \in V}} \rho (w)   
\end{array} $$ commutes. Finally, for any homomorphism $\big\{ \varphi_v : \rho (v) \longrightarrow \sigma (v) \big\}_{_{v \in V}} $ from $\rho$ to $\sigma$, the $v$-component of $ \oplus_{_{w \in V}} \varphi_w $ is  $ \varphi_v : \rho_{_{M_\rho}} (v)= \rho (v) \longrightarrow \sigma_{_{M_\sigma}} (v) =\sigma (v)$.\\

For the graded version the proof is essentially the same as above. Note that the morphisms $(\rho(e))_{e \in X}: \rho ( sX) \longrightarrow \bigoplus\limits_{e \in X} \rho(te)$ for all $X \in \Pi$ of condition (I) necessarily have grade $+1$, i.e., a homogeneous element of grade $n$ is send to a homogeneous element of grade $n+1$.
 \end{proof}\\

  The argument above almost proves the stronger statement that $\mathfrak{M}_{L}$ is \textit{isomorphic} to the subcategory of quiver representations of $\Gamma$ satisfying the condition (I). The only issue is the difference between internal and external direct sums. In fact, we can obtain an isomorphism of categories if we work in a graded category where each subspace $Mv$ of the $L$-module $M$, $v \in V$, is a homogeneous summand. There is no need for such an artifice since equivalence of categories is sufficient for our purposes.\\
  
  $L=\oplus \> vL$ where the sum is over $v \in V$ and each $vL$ is a cyclic projective $L$-module. The vector space $Mv \cong Hom^L(vL, M)$ is actually a module over the corner algebra $vLv \cong End^L(vL)$. \\
  
$\mathfrak{M}_L$ is not only a full subcategory of $\mathfrak{M}_{\mathbb{F} \Gamma}$ but also a quotient, in fact, a retract of $\mathfrak{M}_{\mathbb{F} \Gamma}$.
  
\begin{theorem} \label{forgetful} The composition of the forgetful functor from $\mathfrak{M}_{L(\Gamma, \Pi)}$ to $\mathfrak{M}_{\mathbb{F} \Gamma}$ with  $\underline{\>\>\> } \otimes_{\mathbb{F}\Gamma} L(\Gamma, \Pi)$ from $\mathfrak{M}_{\mathbb{F} \Gamma}$ to $\mathfrak{M}_{L(\Gamma, \Pi)}$ is naturally equivalent to the identity functor on $\mathfrak{M}_{L(\Gamma, \Pi)}$.
\end{theorem} 
\begin{proof}
We first check that both the forgetful functor and $\underline{\>\>\> } \otimes_{\mathbb{F}\Gamma} L(\Gamma, \Pi)$ send unital modules to unital modules. If $M$ is a unital $L(\Gamma, \Pi)$-module then $M=\oplus_{v\in V} Mv$ by Lemma \ref{lemma}. Since $V \subset \mathbb{F}\Gamma$, $M$ is also unital as an $\mathbb{F}\Gamma$-module. If $M$ is a unital $L(\Gamma, \Pi)$-module then $(M \otimes L(\Gamma, \Pi))L(\Gamma, \Pi)=M \otimes L(\Gamma, \Pi)$ because $L(\Gamma, \Pi)$ is unital as an $L(\Gamma, \Pi)$-module.\\

The $L(\Gamma,\Pi)$-module homomorphism $M\otimes_{\mathbb{F}\Gamma}  L(\Gamma,\Pi) \longrightarrow M$
where $m\otimes \alpha \mapsto m\alpha$ defines a natural transformation from the composition of the forgetful functor with $\underline{\>\>\> } \otimes_{\mathbb{F}\Gamma} L(\Gamma, \Pi)$ to the identity functor when $M$ is an $L(\Gamma, \Pi)$-module. To see that this is an isomorphism we define its inverse $M \longrightarrow 
M\otimes_{\mathbb{F}\Gamma}  L(\Gamma, \Pi)$ as $m \mapsto \sum m \otimes v$ where the sum is over $V_m:=\{v \in V \vert mv \neq 0\}$. This sum is finite since $M$ is unital.
If $u \notin V_m$ then $m \otimes u=0$, hence the sum $\sum m\otimes v$ may be taken over any finite subset of $V$ containing $V_m$.\\

To see that this is $L(\Gamma, \Pi)$-linear it is enough to check the action of the generators $V \sqcup E \sqcup E^*$. For all 
$u \in V$ and all $m \in M$ we have $\sum mu \otimes v=m \otimes u=(\sum m\otimes v)u$
because $V$ is a set of orthogonal idempotents. For all $e \in E$ and all $m \in M$ we have $\sum me \otimes v=me \otimes te=m \otimes e=(\sum m \otimes v)e$ because $ve\neq 0$ iff $v=se$. For all $e^* \in E^*$ and all $m \in M$ we have $\sum me^*\otimes v= me^*\otimes se= m e^*\otimes \sum_{f \in X} ff^* =\sum me^*f \otimes f^*=m (te) \otimes e^*=m \otimes e^*=(\sum m\otimes v)e^*$ using (SCK2) and (SCK1) where $e \in X \in Pi$.
\\

The composition $m \mapsto \sum m\otimes v \mapsto \sum mv=m $ by Lemma \ref{lemma}. Since $M=\oplus Mv$ and $Path(\tilde{\Gamma})$ spans $L(\Gamma,\Pi)$, elements of the form $m \otimes v$ with $m \in Mv$ generate $M\otimes L(\Gamma,\Pi)$ as an $L(\Gamma,\Pi)$-module. For such elements $m \otimes v \mapsto mv \mapsto mv \otimes v=m\otimes v$. That is, the other composition is also identity.
\end{proof}\\

  $\mathfrak{M}_{L}$ is a categorical localization of the quiver representations
 of $\Gamma$. This is related to the fact that the algebra $L$ is a universal localization of the path algebra $\mathbb{F} \Gamma$, when $\Gamma$ is non-separated this is Corollary 4.2 of \cite{ab10}. Recall that the universal localization $\Sigma^{-1}A$ of an algebra $A$ with respect to a set $\Sigma=\{ \sigma : P_{\sigma} \longrightarrow Q_{\sigma} \}$ of homomorphisms between finitely generated projective $A$-modules, is an initial object among algebra homomorphisms $f: A \longrightarrow B$ such that $\sigma \otimes id : P_{\sigma} \otimes_A B \longrightarrow  Q_{\sigma} \otimes_A B$ is an isomorphism for every $\sigma$ in $\Sigma$.
 
 \begin{proposition} \label{loc}
 If $\Gamma$ is a finitely separated digraph then $L(\Gamma, \Pi)$ is the universal localization of $\mathbb{F}\Gamma$ with respect to $\{ \sigma_X :  \oplus_{e \in X} (te)\mathbb{F}\Gamma \longrightarrow  (sX)\mathbb{F}\Gamma \> \> \vert \>  \> X \in \Pi \}$
 where $\sigma_X \big( (a_e)_{e \in X} \big) : = \sum_{e \in X}  ea_e$.
 \end{proposition}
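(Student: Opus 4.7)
The plan is to verify the universal property of the Cohn localization directly. Let $\iota: \mathbb{F}\Gamma \to L(\Gamma, \Pi)$ be the canonical embedding of the path algebra. The two things I would check are: (a) under base change along $\iota$, each $\sigma_X$ becomes invertible in $L(\Gamma,\Pi)$; and (b) $\iota$ is initial among algebra homomorphisms $\phi: \mathbb{F}\Gamma \to B$ enjoying this property.

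For (a), after identifying $(te)\mathbb{F}\Gamma \otimes_{\mathbb{F}\Gamma} L \cong (te)L$, the map $\sigma_X \otimes \mathrm{id}$ becomes the right $L$-linear map $\bigoplus_{e \in X} (te)L \to (sX)L$ sending $(a_e)_{e \in X}$ to $\sum_{e \in X} e a_e$. Its inverse is the map $b \mapsto (e^* b)_{e \in X}$; verifying that the two compositions are identities is exactly the content of (SCK1) and (SCK2). This is essentially Lemma \ref{lemma} applied to $M = L$ itself.

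For (b), suppose $\phi: \mathbb{F}\Gamma \to B$ is such that $\sigma_X \otimes_{\mathbb{F}\Gamma} B$ has a (necessarily unique) right $B$-linear inverse $\tau_X: \phi(sX) B \to \bigoplus_{e \in X} \phi(te) B$. Let $c_e \in \phi(te) B$ denote the $e$-th component of $\tau_X(\phi(sX))$; by right $B$-linearity and the idempotency of $\phi(sX)$, each $c_e$ satisfies $c_e\,\phi(sX) = c_e$. I would extend $\phi$ to $\psi: L(\Gamma, \Pi) \to B$ by setting $\psi(e^*) := c_e$ on the dual arrows. The identity $\sigma_X \tau_X = \mathrm{id}$ applied at $\phi(sX)$ gives $\sum_{e \in X} \phi(e) c_e = \phi(sX)$, which is (SCK2). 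Since $\sigma_X$ sends the standard basis vector $(0,\dots,\phi(tf),\dots,0)$ to $\phi(f)$, the identity $\tau_X \sigma_X = \mathrm{id}$ yields $\tau_X(\phi(f)) = (0,\dots,\phi(tf),\dots,0)$; combined with the right $B$-linearity formula $\tau_X(\phi(f)) = (c_e \phi(f))_{e \in X}$, this produces $c_e \phi(f) = \delta_{e,f}\,\phi(tf)$, which is (SCK1). Relations (V) and (E) hold for $\psi$ because they hold for $\phi$ together with the normalizations $c_e = \phi(te) c_e = c_e \phi(sX)$.

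For uniqueness, any algebra homomorphism $L(\Gamma, \Pi) \to B$ extending $\phi$ must send each $e^*$ to an element of $\phi(te) B\,\phi(sX)$ satisfying the images of (SCK1) and (SCK2); those constraints say precisely that the resulting tuple is the inverse image of $\phi(sX)$ under $\sigma_X$, hence is $(c_e)_{e \in X}$. The only mildly subtle point I expect is the translation in the previous paragraph, namely matching the two halves of $\tau_X$ being a two-sided inverse with the two separated Cuntz--Krieger relations; once this correspondence is written down the rest is bookkeeping.
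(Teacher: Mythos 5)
Your proposal is correct and follows essentially the same route as the paper: you verify that $\sigma_X \otimes \mathrm{id}_L$ has inverse $b \mapsto (e^*b)_{e\in X}$, and you obtain the extension of $\phi$ by declaring $\psi(e^*)$ to be the $e$-th component of $\tau_X(\phi(sX))$, exactly as the paper defines $\tilde f(e^*)$ via the composite $f(sX)B \to \bigoplus f(te')B \to f(te)B$. The only difference is one of explicitness: the paper simply asserts that $\tilde f$ is the unique factoring homomorphism, whereas you spell out how $\sigma_X\tau_X = \mathrm{id}$ and $\tau_X\sigma_X = \mathrm{id}$ translate into (SCK2) and (SCK1), which is a worthwhile clarification but not a different argument.
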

 \begin{proof}
 For any $v \in V$ the cyclic (right) module $v\mathbb{F}\Gamma$ is projective since $v$ is an idempotent. For all $X$ in $\Pi$, $\> \sigma_X \otimes id_{L(\Gamma, \Pi)} $ is an isomorphism with inverse $(e^*\cdot)_{e \in X}$ where $e^*\cdot $ denotes left multiplication by $e^*$. When $f: \mathbb{F}\Gamma \longrightarrow B$ is an algebra homomorphism  $f(v)^2=f(v)$ and $v\mathbb{F}\Gamma \otimes_{\mathbb{F}\Gamma} B \cong f(v)B$ via $a\otimes b \mapsto f(a)b$ and $b \mapsto v\otimes b$ (note that $\mathbb{F}\Gamma$ and $B$ need not be unital).  If $f: \mathbb{F}\Gamma \longrightarrow B $ is an algebra homomorphism such that $\sigma_X \otimes id_B$ is an isomorphism for all $X$ in $\Pi$ then the composition $f(sX)B \cong (sX)\mathbb{F}\Gamma \otimes_{\mathbb{F}\Gamma} B \stackrel{\sigma_X^{-1}}{\longrightarrow} \Big(\oplus (te') \mathbb{F}\Gamma \Big) \oplus_{\mathbb{F}\Gamma} B \cong \oplus f(te')B \stackrel{pr_e}{\longrightarrow} f(te)B$ is uniquely and completely determined by the image of $f(sX)$, which we call $f(e^*)$. Now $\tilde{f}(v):=f(v)$ for all $v$ in $V$, $\tilde{f}(e):= f(e)$ for all $e$ in $E\sqcup E^*$ defines the unique homomorphism $\tilde{f}: L(\Gamma, \Pi) \longrightarrow B$ factoring $f$ through $\mathbb{F}\Gamma \longrightarrow L(\Gamma, \Pi)$.
  \end{proof}\\

 With the quiver representation viewpoint there is no need to  mention the generators $\{e^* : e \in E \}$ explicitly, they are implicit in the condition (I). Theorem \ref{teorem} also enables us to construct concrete models for $L$-modules and homomorphisms between them as illustrated in the following applications.
  
  \begin{proposition} \label{prop}
  Let $\Gamma$ be a finitely separated digraph. If $d: V \longrightarrow \mathbb{N} \cup \{ \infty \}$ satisfies $d(sX)=\sum_{_{e \in X} } d(te)$ for all $X$ in $\Pi$ then there is an $L$-module $M$ with $dim^{\mathbb{F}}(Mv)=d(v)$.  
 \end{proposition}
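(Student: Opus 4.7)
The plan is to use Theorem \ref{teorem} to reduce the problem to constructing a quiver representation $\rho$ of $\Gamma$ that satisfies condition (I) and has $\dim^{\mathbb{F}} \rho(v) = d(v)$ for every $v \in V$; then the corresponding $L$-module $M = M_{\rho} = \bigoplus_{v \in V} \rho(v)$ will automatically satisfy $Mv = \rho(v)$, giving the required dimensions.

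For the construction, I would first choose, for each vertex $v \in V$, an arbitrary $\mathbb{F}$-vector space $\rho(v)$ of dimension $d(v)$ (allowing infinite cardinalities). The hypothesis $d(sX) = \sum_{e \in X} d(te)$ is exactly the condition $\dim \rho(sX) = \dim \bigoplus_{e \in X} \rho(te)$, so for each $X \in \Pi$ I may pick some vector space isomorphism
\[
\phi_X : \rho(sX) \xrightarrow{\ \sim\ } \bigoplus_{e \in X} \rho(te).
\]
Here $X$ is finite (since $\Gamma$ is finitely separated), so the right-hand side is a genuine finite direct sum; the sum of cardinalities matches on both sides even when some $d(te) = \infty$, so such an isomorphism exists. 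Since $\Pi$ is a partition of $E$, each arrow $e$ lies in exactly one part, call it $X_e$; I then define
\[
\rho(e) \;:=\; p_e \circ \phi_{X_e} : \rho(se) \longrightarrow \rho(te),
\]
where $p_e : \bigoplus_{f \in X_e} \rho(tf) \longrightarrow \rho(te)$ is the canonical projection onto the $e$-th summand.

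With these definitions, $(\rho(e))_{e \in X} = \phi_X$ is an isomorphism by construction, so $\rho$ satisfies condition (I). Applying the quasi-inverse functor from Theorem \ref{teorem}, one obtains a unital right $L$-module $M = M_{\rho}$ with $Mv = \rho(v)$, hence $\dim^{\mathbb{F}}(Mv) = d(v)$ for all $v \in V$. There is essentially no obstacle here: the only point requiring a moment's thought is that $\Pi$ being a partition guarantees each $e$ belongs to a unique $X_e$, so $\rho(e)$ is unambiguously defined, and the finiteness of each $X$ ensures the direct sum is well-behaved regardless of whether the individual dimensions $d(te)$ are finite or infinite.
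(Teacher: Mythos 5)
Your approach coincides with the paper's: construct a quiver representation $\rho$ satisfying condition (I) with $\dim^{\mathbb{F}}\rho(v) = d(v)$ for every vertex $v$, then apply Theorem \ref{teorem}. There is, however, a small gap in the step where you assert that ``the sum of cardinalities matches on both sides even when some $d(te) = \infty$, so such an isomorphism exists.'' When $d(v) = \infty$, the phrase ``a vector space of dimension $d(v)$'' is ambiguous, since $\infty$ is a single element of $\mathbb{N} \cup \{\infty\}$ rather than a specific cardinal, and you explicitly allow $\rho(v)$ to have an arbitrary infinite cardinality. But the equation $d(sX) = \sum_{e \in X} d(te)$ holding in $\mathbb{N} \cup \{\infty\}$ does not force $\dim\rho(sX) = \dim\bigoplus_{e \in X}\rho(te)$ as actual cardinals: for $X = \{e\}$ with $d(se) = d(te) = \infty$, you might have picked $\dim\rho(se) = \aleph_0$ and $\dim\rho(te) = \aleph_1$, in which case no isomorphism $\phi_X$ exists. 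The paper closes exactly this gap by fixing $\rho(v) := \mathbb{F}^{(\mathbb{N})}$ whenever $d(v) = \infty$, so that every infinite-dimensional $\rho(v)$ has the same countable dimension; with that consistent choice, the finiteness of each $X$ together with the hypothesis on $d$ really does guarantee that $\dim\rho(sX) = \dim\bigoplus_{e \in X}\rho(te)$ (both are $\aleph_0$ when either is infinite, and equal natural numbers otherwise). Adopting such a uniform choice for the infinite-dimensional $\rho(v)$ repairs your argument.
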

     
       \begin{proof}
  Let the quiver representation $\rho$ be given by $\rho(v) :=  \mathbb{F}^{d(v)}$ if $ d(v) < \infty$   and $\rho(v):= \mathbb{F}^{(\mathbb{N})} $ otherwise. We can find isomorphisms $\theta_{_X}: \rho(sX) \longrightarrow \oplus_{_{e \in X}} \rho(te)$ for all $X$ in $\Pi$ by the hypothesis on $d$. Let $\rho(e): =\theta_{_X} pr_e$ for all $e$ in $E$. Condition (I) is satisfied by construction and the corresponding $L$-module $M$ of Theorem \ref{teorem} has $dim^{\mathbb{F}}(Mv)=dim^{\mathbb{F}} \rho(v)=d(v)$.
      \end{proof} 
 \begin{corollary} \label{cor1}     
 There is an $L$-module $M$ with $Mv \cong \mathbb{F}^{(\mathbb{N} )}$ for all $v$ in $V$. Hence $p$ and $p^*$ are nonzero in $L$ for every path $p$ of $\Gamma$. 
 \end{corollary}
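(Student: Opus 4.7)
The plan is to apply Proposition \ref{prop} to the constant function $d \equiv \infty$ and then read off the nonvanishing statements from the surjectivity/injectivity remarks following Lemma \ref{lemma}.

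First I would verify that $d : V \to \mathbb{N} \cup \{\infty\}$ defined by $d(v) := \infty$ satisfies the hypothesis of Proposition \ref{prop}. Since $\Gamma$ is finitely separated, each $X \in \Pi$ is a nonempty finite set, so $\sum_{e \in X} d(te) = \sum_{e \in X} \infty = \infty = d(sX)$. Proposition \ref{prop} then yields an $L$-module $M$ with $\dim^{\mathbb{F}}(Mv) = \infty$, i.e., $Mv \cong \mathbb{F}^{(\mathbb{N})}$, for every $v \in V$.

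For the second assertion, I would invoke the consequence of Lemma \ref{lemma} noted in the text: for any arrow $e \in E$ right multiplication $\mu_e : Mse \to Mte$ is a component of the isomorphism $(\mu_e)_{e \in X}$ and is therefore surjective, while $\mu_{e^*} : Mte \to MsX$ is a component of its inverse and is therefore injective (equivalently, $m e^* e = m(te) = m$ for $m \in Mte$). Given a path $p = e_1 \cdots e_n$, the map $\mu_p : Msp \to Mtp$ is the composition $\mu_{e_1}\cdots \mu_{e_n}$, a composition of surjections onto nonzero spaces, so $\mu_p$ is nonzero; in particular $p \neq 0$ in $L$. Likewise $\mu_{p^*} : Mtp \to Msp$ is the composition of injections $\mu_{e_n^*}\cdots \mu_{e_1^*}$ out of the nonzero space $Mtp$, hence nonzero, so $p^* \neq 0$.

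There is no real obstacle here: both parts follow directly from results already established (Proposition \ref{prop} and Lemma \ref{lemma}). The only point requiring a (trivial) check is that $\infty$ behaves well in the additivity condition of Proposition \ref{prop}, which is guaranteed by the finite-separation hypothesis ensuring each $X \in \Pi$ is a nonempty finite index set.
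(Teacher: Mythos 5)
Your proof is correct and follows essentially the same route as the paper: apply Proposition \ref{prop} with $d\equiv\infty$ and then use the surjectivity consequence of Lemma \ref{lemma} to conclude $p\neq 0$. The only (inessential) difference is in the $p^*$ step, where you argue directly that $\mu_{p^*}$ is a composition of injections, whereas the paper simply invokes that $*$ is an involution so $p^*=0$ would force $p=(p^*)^*=0$; both are equally valid one-line arguments.
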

 \begin{proof}
 The existence of an $L$-module $M$ with $Mv \cong \mathbb{F}^{(\mathbb{N} )}$ for all $v$ in $V$ is given by Proposition \ref{prop}. Right multiplication by $p$ is onto from $Msp$ to $Mtp \cong \mathbb{F}^{(\mathbb{N} )}$ by Lemma \ref{lemma}. Hence every path $p$ in $L$ is nonzero. Since $*$ is an involution $p^*$ also is nonzero.
 \end{proof}\\

The  next proposition shows that the natural algebra homomorphism from $\mathbb{F}\Gamma$ to $L(\Gamma, \Pi)$ is injective. (For a non-separated digraph this is Lemma 1.6 of \cite{goo09}.) Thus we may regard the path algebra $\mathbb{F}\Gamma$ as a subalgebra of the Leavitt path algebra $L$.  This is still true when $\Gamma $ is not finitely separated and it follows from Theorem 2.7 in \cite{ag12}, a more general result giving an $\mathbb{F}$-basis for a Cohn-Leavitt path algebra. Below we provide a different (short) proof.  

\begin{proposition}\label{1-1}
If $\Gamma$ is a finitely separated digraph then the homomorphism from the path algebra $\mathbb{F}\Gamma$ to the Leavitt path algebra $L$ is injective.
\end{proposition}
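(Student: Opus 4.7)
The plan is to combine the universal $F_E$-grading from the Proposition above with the module $M$ of Corollary \ref{cor1}, in which $Mv \cong \mathbb{F}^{(\mathbb{N})}$ for every $v \in V$. The key observation is that since $F_E$ is the \emph{free} group on $E$, a path $p = e_1 \cdots e_n$ of length $n \geq 1$ in $\Gamma$ is already a reduced word and its $F_E$-degree is the group element $e_1 e_2 \cdots e_n$; hence distinct positive-length paths of $\Gamma$ have distinct $F_E$-degrees, while all vertices share the trivial degree. This lets the grading separate positive-length paths cleanly from one another.

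Given a finite linear combination $\xi = \sum_i c_i p_i$ of distinct paths of $\Gamma$ that maps to $0$ in $L$, I would first apply $F_E$-homogeneity to conclude that each positive-length summand $c_i p_i$ vanishes in $L$ on its own, and that the length-zero part $\sum_{v \in V} a_v v$ also vanishes in $L$. For each positive-length $p_i$, Corollary \ref{cor1} gives $p_i \neq 0$ in $L$, so $c_i = 0$ immediately. What remains is the linear independence of the vertices.

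For the vertex part, I would use $M$ again: if $\sum_{v \in S} a_v v = 0$ in $L$ for a finite subset $S \subseteq V$, pick any $w \in S$ and any nonzero $m \in Mw$ (which exists because $Mw \cong \mathbb{F}^{(\mathbb{N})}$). By the orthogonality of the vertex idempotents (which is the decomposition $M = \bigoplus_{v \in V} Mv$ of Lemma \ref{lemma}), right multiplication yields $m \cdot v = \delta_{v,w}\, m$, so $0 = m \cdot \sum_{v \in S} a_v v = a_w m$, forcing $a_w = 0$ and hence $a_v = 0$ for all $v \in S$. The main (but mild) obstacle is the harmless fact that all vertices collapse to the trivial $F_E$-degree, which is precisely why a separate module-theoretic argument is required for the length-zero component; once Corollary \ref{cor1} and the universal grading are in hand, there is nothing deeper to establish.
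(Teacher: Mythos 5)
Your proof is correct and takes essentially the same approach as the paper: decompose the putative kernel element by the universal $F_E$-grading so that positive-length paths separate into distinct homogeneous components, then kill each positive-length component via Corollary \ref{cor1} (equivalently the surjectivity of right multiplication by a path on the module $M$ from Proposition \ref{prop}), and kill the degree-$1_{F_E}$ vertex component using the orthogonal idempotent decomposition $M=\bigoplus_v Mv$ with each $Mv$ infinite-dimensional. The only cosmetic difference is that the paper phrases the grading step as "the kernel is a graded ideal whose homogeneous elements are scalar multiples of paths or linear combinations of vertices," whereas you unpack it componentwise; the substance is identical.
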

\begin{proof}
Let $F_E$ be the free group on $E$ (the arrow set of $\Gamma$). We can define $F_E$-gradings 
on  $\mathbb{F}\Gamma$ and $L$ by $\vert v \vert =1$ for all $v \in V$, $\vert e \vert =e$ and $ \vert e^* \vert =e^{-1}$ for all $e \in E$ (since all the relations are homogeneous this grading is well-defined). The homomorphism $\mathbb{F}\Gamma \longrightarrow L$ is graded so its kernel is a graded ideal. Any homogeneous element of  $\mathbb{F}\Gamma$ is either a scalar multiple of some path $p$ of positive length or a linear combination of vertices. By Proposition \ref{prop} there is an $L$-module $M$ with $dim(Mv)=\infty$ for every $v$ in $V$. Vertices of $\Gamma$ are orthogonal idempotents of $L$ defining projections on $M$ with infinite dimensional images, so a linear combination of vertices will be zero in $L$ if and only if it is trivial. Also, Lemma \ref{lemma} implies that the linear transformation given by right multiplication with $p$ of positive length from $Msp$ to $Mtp$ is onto, hence $p \neq 0$ in $L$. 
Therefore the kernel of $\mathbb{F}\Gamma \longrightarrow L$ is trivial.  
\end{proof}\\

Lemma 1.6 of \cite{goo09}  actually states that the set of all paths and all dual paths $\{p \} \cup \{ p^* \}$ is linearly independent in $L$ for a non-separated digraph $\Gamma$. The proof above yields this stronger statement for a finitely separated $\Gamma$ since all elements of $\{p \} \cup \{ p^* \}$ are nonzero (by Corollary \ref{cor1}) homogeneous and they have different grades. Theorem 2.7 in \cite{ag12} provides an even stronger statement.

  \begin{definition}

A \textbf{dimension function} of a finitely separated digraph $\Gamma$ is a function $d: V  \longrightarrow \mathbb{N}$  satisfying $d(sX) =\sum_{e\in X} d(te) \quad \textit{ for all } X \textit{ in } \Pi.$

\end{definition}
 
If the $L$-module $M$ is \textit{finitary}, that is,  $dim(Mv)<\infty$ for all $v$ in $V$ then Lemma \ref{lemma} shows that $d(v):=dim(Mv)$ is a dimension function. The converse also holds, that is, every dimension function is realizable:

\begin{corollary} \label{cor}
If $d$ is a dimension function for $\Gamma$ then there exists an $L$-module $M$ with $dim(Mv)=d(v)$. Hence $L$ has a nonzero finite dimensional module if and only if $\Gamma$ has a nonzero dimension function of finite support.
\end{corollary}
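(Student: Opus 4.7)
The first assertion is essentially a restatement of Proposition \ref{prop} restricted to $\mathbb{N}$-valued functions. Since a dimension function takes values in $\mathbb{N}$ (not $\mathbb{N} \cup \{\infty\}$) and satisfies exactly the hypothesis of Proposition \ref{prop}, the proposition directly produces an $L$-module $M$ with $\dim^{\mathbb{F}}(Mv) = d(v)$ for all $v \in V$.

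For the biconditional, I would argue both directions separately. For the $(\Leftarrow)$ direction, suppose $d$ is a nonzero dimension function of finite support. Apply the first assertion (i.e., Proposition \ref{prop}) to obtain an $L$-module $M$ with $\dim^{\mathbb{F}}(Mv) = d(v)$. Since $M$ is unital, Lemma \ref{lemma} gives $M = \bigoplus_{v \in V} Mv$ as vector spaces, so $\dim^{\mathbb{F}} M = \sum_{v \in V} d(v)$. Because $d$ has finite support and only finite values, this sum is finite; because $d \neq 0$, the module $M$ is nonzero.

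For the $(\Rightarrow)$ direction, suppose $M$ is a nonzero finite dimensional $L$-module. Then $M$ is automatically unital (every finite dimensional module over a ring spanned by idempotents is unital, since $M = M \cdot (\sum_{v \in \Lambda} v)$ for a sufficiently large finite $\Lambda \subseteq V$ —the relevant $v$ being those that act nontrivially on a chosen basis of $M$). By Lemma \ref{lemma}, $M = \bigoplus_{v \in V} Mv$, so $d(v) := \dim^{\mathbb{F}}(Mv)$ has finite support (only finitely many $v$ contribute to the finite-dimensional sum) and is nonzero (since $M \neq 0$). The isomorphism $(\mu_e)_{e \in X} : MsX \to \bigoplus_{e \in X} Mte$ of Lemma \ref{lemma} yields $d(sX) = \sum_{e \in X} d(te)$ for every $X \in \Pi$, so $d$ is a dimension function.

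Neither direction involves any real obstacle: the first assertion is a direct invocation of Proposition \ref{prop}, and the second reduces to the decomposition and isomorphism already established in Lemma \ref{lemma}. The only mild subtlety is checking that a finite dimensional $L$-module is automatically unital in the sense defined at the start of Section 3, which is handled by the observation above (or, equivalently, by the fact that only finitely many of the mutually orthogonal idempotents $v \in V$ can act nontrivially on a finite dimensional space).
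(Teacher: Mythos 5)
Your proof follows the paper's approach exactly: the first assertion is Proposition \ref{prop} with $d$ taking values in $\mathbb{N}$, and the biconditional follows from the decomposition $M = \bigoplus_{v \in V} Mv$ of Lemma \ref{lemma}, which forces $d(v) = \dim(Mv)$ to be a nonzero dimension function of finite support exactly when $M$ is nonzero and finite dimensional. The one thing to flag is your side observation that a finite dimensional $L$-module is automatically unital: this is false. Take any nonzero finite dimensional vector space with $L$ acting as zero; this is a perfectly good $L$-module (recall $L$ need not have a $1$) which is nonzero and finite dimensional but satisfies $ML = 0 \neq M$, and in fact with this reading the corollary's biconditional would be false (e.g.\ for $R_2$, the rose with two petals, the only dimension function is $0$). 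Your heuristic argument breaks precisely here: the set $\Lambda$ of vertices acting nontrivially can be empty, or more generally $m$ need not lie in the span of $\{mv : v \in V\}$. The paper avoids this issue because Section 3 works exclusively in the category $\mathfrak{M}_L$ of unital modules, so ``module'' in the corollary implicitly means unital module, and the decomposition $M = \bigoplus Mv$ is available by hypothesis rather than by a finite-dimensionality argument.
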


\begin{proof} 
This is the special case of Proposition \ref{prop} with $d: V \longrightarrow \mathbb{N}$. By Lemma \ref{lemma}, $dim(M)=\sum_{v \in V} dim(Mv)$, hence $d(v)=dim (Mv)$ has finite support if $M$ is finite dimensional. 
\end{proof}\\

If $M$ is a nonzero finite dimensional $L$-module then the image of $L$ in 
$End^\mathbb{F} M$ is (isomorphic to) a nonzero finite dimensional quotient of $L$.
Conversely a nonzero finite dimensional quotient is also a (nonzero finite dimensional) $L$-module. 
Thus the corollary above gives a necessary and sufficient condition for the existence of nonzero finite dimensional quotient. When $\Gamma$ is a (non-separated) row-finite digraph we determine all possible finite dimensional quotients of $L(\Gamma)$ in Theorem \ref{teo1} of Section 6.



\section{Support subgraphs and the monoid of a finitely separated digraph}

In this section we reinterpret the criterion for the existence of a nonzero finite dimensional representation in terms of the nonstable $K$-theory of $L(\Gamma): =L_{\mathbb{F}}(\Gamma, \Pi)$. First we will need a few definitions. \\
 
A subgraph $\Gamma'=( V',E')$ of $\Gamma=( V, E)$ is called \textbf{cohereditary} when for all $e$ in $E$ if $te \in V'$ then $se$ also is in $V'$. 
A subgraph $\Gamma'$ of the separated digraph $\Gamma$ is \textbf{colorful} for any $X$ in $\Pi$ if $sX \in V'$ then $ X \cap E' \neq \emptyset $. If $M$ is a right $L(\Gamma)$-module  then the \textbf{support subgraph} of $M$, denoted by $\Gamma_{_M}$, is the induced subgraph of $\Gamma$ on $V_{_M}:=\{v \in V \mid Mv \neq 0 \}$.  \\
 

The subgraph $\Gamma'=(V',E')$ is cohereditary  if and only if $V\setminus V'$ is a hereditary subset of $V$. When $\Gamma' $ is full then $\Gamma'$ is colorful if and only if $V \setminus V'$ is $\Pi$-saturated as defined in \cite{ag12}. Our focus is more on the support subgraph rather than the ideal $I_{_M}$ generated by $V\setminus V_{_M}= \{ v \in V \>  \vert \> Mv=0 \}$, so we work with cohereditary and colorful instead of hereditary and $\Pi$-saturated.


\begin{lemma} \label{support}
 The following are equivalent for a subgraph $\Lambda$ of a (finitely separated) digraph $\Gamma :$\\
(i) $\Lambda =\Gamma_{_M}$, the support subgraph of a unital $L(\Gamma)$-module $M$.\\
(ii) $\Lambda$ is a full, cohereditary and colorful subgraph. \\
(iii) If $\Lambda=(V',E')$ then
\[ \theta(v)=
\left\{
\begin{array}{cc}
 v &   \> \> v \in V'  \\
  0 &   \> \>  v\notin V' 
\end{array}
\right. \> ; \quad \theta(e)=
\left\{
\begin{array}{cc}
 e &   \> \> e \in E'  \\
  0 &   \> \>  e\notin E'
  \end{array}
\right.    \> ; \quad \theta(e^*)=
\left\{
\begin{array}{cc}
 e^* &  \>  \> e \in E'  \\
  0 \> \> &  \> \> e \notin E'
\end{array} \right.
\]
defines an onto algebra homomorphism $\theta: L(\Gamma) \longrightarrow L(\Lambda)$.
\end{lemma}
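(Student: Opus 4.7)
The plan is to prove the equivalence cyclically: (i) $\Rightarrow$ (ii) $\Rightarrow$ (iii) $\Rightarrow$ (i).

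For (i) $\Rightarrow$ (ii), fullness is immediate from the definition of $\Gamma_{_M}$. For cohereditary: suppose $te \in V_{_M}$, and let $X \in \Pi$ be the part containing $e$. Lemma \ref{lemma} gives $MsX \cong \bigoplus_{f \in X} Mtf$, so $Mte \neq 0$ forces $MsX = Mse \neq 0$, i.e., $se \in V_{_M}$. For colorful: if $sX \in V_{_M}$, then $MsX \neq 0$ and the same isomorphism produces some $e \in X$ with $Mte \neq 0$; since $se = sX \in V_{_M}$ as well, fullness gives $e \in E_{_M}$, so $X \cap E_{_M} \neq \emptyset$.

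For (ii) $\Rightarrow$ (iii), it suffices to verify that $\theta$ respects all defining relations of $L(\Gamma)$; surjectivity onto $L(\Lambda)$ is immediate since $V' \sqcup E' \sqcup (E')^{*}$ generates the target and lies in the image. Relations (V) and ($E$) are straightforward case analyses. For (SCK1) with $e,f \in X$: if both lie in $E'$, this is (SCK1) for the part $X \cap E'$ in the induced separation $\Pi'$ of $\Lambda$; otherwise $\theta(e)^{*}\theta(f)=0$, and when $e = f \notin E'$ we need $\theta(te)=0$, which holds because the contrapositive of cohereditary says $e \notin E' \Rightarrow te \notin V'$ (using fullness). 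For (SCK2) on a finite $X$: if $sX \notin V'$ then every $e \in X$ has $se \notin V'$, hence $e \notin E'$ by fullness, so both sides vanish; if $sX \in V'$ then colorful gives $X \cap E' \neq \emptyset$, and $\sum_{e \in X} \theta(e)\theta(e)^{*} = \sum_{e \in X \cap E'} ee^{*} = sX$ is precisely the (SCK2) relation for the part $X \cap E' \in \Pi'$ of $L(\Lambda)$.

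For (iii) $\Rightarrow$ (i), view $M := L(\Lambda)$ as a right $L(\Gamma)$-module via the surjection $\theta$. Since every element of $L(\Lambda)$ is a finite sum of elements of the form $\ell v$ with $v \in V'$, the module $M$ is unital in the sense of Section 3. Then $Mv = L(\Lambda)\theta(v) = L(\Lambda) v$ when $v \in V'$ and $Mv = 0$ when $v \notin V'$. Proposition \ref{1-1} applied to $L(\Lambda)$ shows that each $v \in V'$ is a nonzero idempotent, hence $L(\Lambda)v \neq 0$, so $V_{_M} = V'$. Because $\Lambda$ is full, the induced subgraph on $V_{_M}$ is exactly $\Lambda$, so $\Gamma_{_M} = \Lambda$. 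The main subtlety is the case analysis in (ii) $\Rightarrow$ (iii), particularly reconciling the (SCK2) relation between $\Gamma$ and $\Lambda$, where both the cohereditary (to kill the $sX \notin V'$ case) and colorful (to guarantee a nontrivial part $X \cap E'$ in the $sX \in V'$ case) conditions are needed.
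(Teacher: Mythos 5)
Your argument follows the same route as the paper's ((i) $\Rightarrow$ (ii) $\Rightarrow$ (iii) $\Rightarrow$ (i)) and the details in (i) $\Rightarrow$ (ii) and (ii) $\Rightarrow$ (iii) are correct, including the (SCK1)/(SCK2) case analysis where cohereditary and colorful do the work you describe.

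There is, however, a real gap in (iii) $\Rightarrow$ (i). You write ``Because $\Lambda$ is full, the induced subgraph on $V_{_M}$ is exactly $\Lambda$,'' but fullness of $\Lambda$ is not a hypothesis in (iii) --- there $\Lambda$ is only required to be a subgraph of $\Gamma$ such that $\theta$ is an onto algebra homomorphism. Showing $V_{_M}=V'$ (which you do via Proposition \ref{1-1}) only pins down the vertex set of $\Gamma_{_M}$; to conclude $\Gamma_{_M}=\Lambda$ you still need to show $E'$ is all of $\{e : se,te\in V'\}$. This has to be \emph{derived} from (iii): if $te\in V'$ then $\theta(te)=te\neq 0$ in $L(\Lambda)$, but $\theta(e^*)\theta(e)=\theta(e^*e)=\theta(te)$, so $\theta(e)\neq 0$, forcing $e\in E'$. (In particular $te\in V'$ already forces $e\in E'$, which is stronger than fullness and is exactly how the paper closes this implication.) Without this step your proof only establishes that $\Lambda$ and $\Gamma_{_M}$ have the same vertex set.

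A minor attribution note in (ii) $\Rightarrow$ (iii): in the $sX\notin V'$ case of (SCK2), the fact that $e\notin E'$ follows just from $\Lambda$ being a subgraph (arrows in $E'$ have source in $V'$), not from fullness; the claim is correct but the cited reason is off.
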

\begin{proof}
($i$) $\Rightarrow$ ($ ii$): $\Gamma_{_M}$ is a induced subgraph hence full. If $te $ is in $\Gamma_{_M}$ then $0 \neq Mte=Me^*e \subseteq Me=M(se)e$, so $Mse \neq 0$, that is, $se $ is also in $\Gamma_M$ and $\Gamma_M$ is cohereditary.
 If $sX$ is in $\Gamma_{_M}$ then $0 \neq MsX \cong \sum_{e \in X} Mte$ implies that there is an $e$ in $X$ with $Mte \neq 0$. Thus $e$ is in $\Gamma_{_M}$  and $\Gamma_{_M}$ is colorful. \\

($ii$) $\Rightarrow$ ($ iii$): We need to check that $\theta$ preserves the defining relations of $L(\Gamma)$. No hypothesis is necessary to see that the path algebra  relations are satisfied.  For $e , f$ in X if $e\neq f$ then $e^*f=0$ in $L(\Lambda)$ as well as in $L(\Gamma)$. To see that $e^*e=te$ is preserved we need $\Lambda$ to be cohereditary and full. (If $e \in E'$ then $te\in V' $ and $e^*e=te$ holds in $L(\Lambda)$ also. If $te \in V'$ then $se \in V'$ since $\Lambda$ is cohereditary. So $e \in E'$ since $\Lambda$ is full. Again $e^*e=te$ holds in $L(\Lambda)$. Otherwise $e^*e=0=te$ in $L(\Lambda)$.) 
Finally, if $sX \in V'$ then $X\cap E' \neq \emptyset$ since $\Lambda$ is colorful. The image of $sX=\sum_{e \in X} ee^*$ under $\theta$ is $sX=\sum_{e \in E' \cap X} ee^*$, a defining relation of $L(\Lambda)$.\\

 ($iii$) $\Rightarrow$ ($ i$): Given a subgraph $\Lambda=(V',E')$ so that $\theta :L(\Gamma) \longrightarrow L(\Lambda) $ defines an algebra epimorphism, let $M:= L(\Lambda) \cong L(\Gamma)/ Ker \theta$. Now $v \in V'$ if and only if $\theta (v) \neq 0 $ and $Mv=L(\Lambda)v \neq 0$.
  Hence the vertex set of $\Gamma_{_M}$ is $V'$. But $\Lambda$ is full (if $te \in V' $ then $ 0 \neq \theta (te)= \theta (e^*) \theta(e)$, so $e \in E'$) hence $\Gamma_{_M}= \Lambda$. 
\end{proof}

  \begin{proposition} \label{rest.}
 If $M$ is a unital $L(\Gamma)$-module then $M$ also has the structure of a unital $L(\Gamma_{_M})$-module (where $\Gamma_{_M}$ is the support subgraph of $M$) inducing the $L(\Gamma)$ structure via the epimorphism $\theta :L(\Gamma) \longrightarrow L(\Gamma_{_M})$. Hence $Ker \theta  \subseteq AnnM$.
 \end{proposition}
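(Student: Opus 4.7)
The plan is first to invoke Lemma \ref{support} to obtain the epimorphism $\theta$, and then to verify by hand that each generator of $L(\Gamma)$ lying in $\mathrm{Ker}\,\theta$ annihilates $M$; the two-sided ideal $\mathrm{Ann}\,M$ will then contain $\mathrm{Ker}\,\theta$, and the $L(\Gamma)$-action on $M$ will factor uniquely through $\theta$, giving the desired $L(\Gamma_{_M})$-structure.

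Since $\Gamma_{_M}$ is the support subgraph of a unital $L(\Gamma)$-module, Lemma \ref{support} (i)$\Rightarrow$(iii) produces the algebra epimorphism $\theta:L(\Gamma)\longrightarrow L(\Gamma_{_M})$, whose kernel is generated, as a two-sided ideal, by $\{v:v\in V\setminus V_{_M}\}\cup\{e,e^{*}:e\in E\setminus E_{_M}\}$. It therefore suffices to show that each of these generators annihilates $M$, because $\mathrm{Ann}\,M$ is a two-sided ideal.

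For a vertex $v\notin V_{_M}$ the claim is immediate from the definition of the support subgraph, namely $Mv=0$. For an arrow $e\notin E_{_M}$, the fact that $\Gamma_{_M}$ is the induced subgraph on $V_{_M}$ forces at least one of $se$, $te$ to lie outside $V_{_M}$. By Lemma \ref{lemma} one has $M=\bigoplus_{v\in V}Mv$, so right multiplication by $e$ is supported on $Mse$ and surjects onto $Mte$. If $se\notin V_{_M}$ then $Mse=0$, hence $Me=0$; if $te\notin V_{_M}$ then $Mte=0$, so the surjection $Mse\twoheadrightarrow Mte$ is the zero map and again $Me=0$. The generator $e^{*}$ is handled in the same spirit: from $e^{*}=(te)\,e^{*}(se)$ one sees that right multiplication by $e^{*}$ is supported on $Mte$ and lands in $Mse$, so vanishing of either space forces $Me^{*}=0$.

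Combining these observations gives $\mathrm{Ker}\,\theta\subseteq\mathrm{Ann}\,M$, so the $L(\Gamma)$-action on $M$ descends along $\theta$ to an $L(\Gamma_{_M})$-action inducing the original. Unitality is preserved because $\theta$ is surjective: $M\cdot L(\Gamma_{_M})=M\cdot\theta(L(\Gamma))=M\cdot L(\Gamma)=M$. There is no genuine obstacle here; the only mildly delicate point is recognizing that each of the three types of generator outside the support requires its own short argument, and that the cases for $e$ and $e^{*}$ rely on both the decomposition $M=\bigoplus Mv$ and the surjectivity built into Lemma \ref{lemma}.
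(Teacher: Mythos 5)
There is a genuine gap. You assert that the kernel of $\theta$ is generated, as a two-sided ideal, by $\{v : v \in V\setminus V_{_M}\}\cup\{e,e^{*}:e\in E\setminus E_{_M}\}$, citing Lemma \ref{support}(i)$\Rightarrow$(iii); but that lemma only produces the epimorphism $\theta$, it says nothing about $\mathrm{Ker}\,\theta$. The inclusion of your generating set $S$ into $\mathrm{Ker}\,\theta$ is obvious, but the reverse inclusion $\mathrm{Ker}\,\theta \subseteq (S)$ is exactly what is nontrivial, and without it your conclusion only gives $(S)\subseteq\mathrm{Ann}\,M$, not $\mathrm{Ker}\,\theta\subseteq\mathrm{Ann}\,M$. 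This fact is the content of Remark \ref{remark} in the paper (stated for vertex generators alone, which suffice: if $e\notin E_{_M}$ and $se\in V_{_M}$, cohereditariness forces $te\notin V_{_M}$, so $e=e\,(te)$ already lies in the ideal generated by $V\setminus V_{_M}$). Proving it requires constructing the inverse map $L(\Gamma_{_M})\to L(\Gamma)/(S)$ and checking the Leavitt relations hold in the quotient — in particular (SCK2), which uses that the support subgraph is full, cohereditary and colorful so that $\sum_{e\in X}ee^{*}$ collapses to $\sum_{e\in X\cap E_{_M}}ee^{*}$ modulo $(S)$. None of this appears in your argument.

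The paper sidesteps this entirely. Its proof never identifies $\mathrm{Ker}\,\theta$; instead it uses Theorem \ref{teorem}: take the quiver representation $\rho_{_M}$ corresponding to $M$, restrict it to $\Gamma_{_M}$, verify that condition (I) survives restriction (because the vanishing summands $Mte$ for $e\in X\setminus E_{_M}$ contribute nothing), obtain the corresponding $L(\Gamma_{_M})$-module $M'$, and then check that $M'$, viewed as an $L(\Gamma)$-module through $\theta$, is isomorphic to $M$. The statement $\mathrm{Ker}\,\theta\subseteq\mathrm{Ann}\,M$ then falls out as a corollary, rather than being a prerequisite. Your route is salvageable if you first prove (or explicitly cite) that $\mathrm{Ker}\,\theta=(V\setminus V_{_M})$, but as written it presupposes the harder half of that equality.
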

 \begin{proof}
Let $\rho_{_M}$ be the quiver representation of $\Gamma$ corresponding to $M$ (as in Proposition \ref{teorem}). The restriction of $\rho_{_M}$ to $\Gamma_{_M}$ satisfies (I) because for all $X \in \Pi_{_M}$

$$ \rho_{_M} \vert_{_{\Gamma_M}} (sX)=MsX  \stackrel{\cong \quad }{\longrightarrow} \bigoplus_{e \in X } Mte \cong \bigoplus_{e \in X \cap E_M} Mte =\bigoplus_{e \in X \cap E_M} \rho_{_M} \vert_{_{\Gamma_M}}(te)$$ 
(since $Mte=0$ for $e \in X \setminus E_{_M}$ ). Let $M'$ be the unital $L(\Gamma_{_M})$-module coresponding to $\rho_{_M} \vert_{_{\Gamma_M}}$. Now $M'$ is also an $L(\Gamma)$-module via $\theta : L(\Gamma) \longrightarrow L(\Gamma_{_M})$.   As vector spaces $M'=\bigoplus_{v \in V_M} Mv  \cong \bigoplus_{v \in V} Mv=M$ by Lemma \ref{lemma} and since $Mv=0 $ for $v \in V\setminus V_{_M}$.
We can define an $L(\Gamma_{_M})$-module structure on $M$ via this isomorphism. But the action of the generatings $v\in V \> , \> \>e \in E ,\>   e^* \in E^*$ on $M$ and $M'$ is compatible with this isomorphism, so $M\cong M'$ as an $L(\Gamma)$-modules. Thus the $L(\Gamma)$-module structure of $M$ is induced from the $L(\Gamma_{_M})$-module structure via $\theta$.  
 \end{proof}\\
 
As an $L(\Gamma_M)$-module, $M$ has full support, that is, $Mv \neq 0$ for all $v \in V_M$.
 
 \begin{remark} \label{remark}
If  $I_{_M}$ is the kernel of  $\theta :L(\Gamma) \longrightarrow L(\Gamma_{_M})$ then $I_{_M}$ is generated by $V\setminus V_{_M}= \{ v \in V  \mid Mv=0 \}$.  
 \end{remark}

 \begin{proof} Let $J$ be the ideal generated by $V\setminus V_{_M}$. Clearly $V\setminus V_{_M} \subseteq I_{_M}$, hence we have the projection from $L(\Gamma)/J$ to $L(\Gamma)/I_{_M} \cong L(\Gamma_{_M})$. Conversely, let  $\varphi: L(\Gamma_{_M}) \longrightarrow L(\Gamma)/J$ defined by: $\varphi (v) =v+J \> ,\> \varphi (e) =e+ J \> ,\> \varphi (e^*) =e^* +J$.  The defining relations of $L(\Gamma_{_M})$, except for (SCK2), are trivially satisfied. If  $X\cap E_{_M} \neq \emptyset $ for $X \in \Pi$ then $\sum_{e \in X\cap E_{_M}} ee^*+ J=\sum_{e \in X} ee^*+ J  $ because $sX \in V_{_M}$ so $e \in X \setminus  E_{_M}$ if and only if $te \notin V_{_M}$.  But $\varphi$ is the inverse of the projection $L(\Gamma)/J \longrightarrow  L(\Gamma)/ I_{_M}$ thus $I_{_M}=J$.
\end{proof}

\begin{definition} \label{monoid}
The (additive) monoid $S(\Gamma)$ of the finitely separated digraph $\Gamma$ is generated by $V$ subject to the relations:\\

  $$sX =\sum_{e\in X} te  \quad \textit{ for all } X \textit{ in } \Pi .$$ 
	
\end{definition}


Hence, dimension functions of $\Gamma$ correspond exactly to monoid homomorphisms from $S(\Gamma)$ to $\mathbb{N}$ (natural numbers under addition). \\

$S(\Gamma)$  is isomorphic to the monoid $\mathcal{V} (L(\Gamma))$ of nonstable K-Theory of $L(\Gamma)$, (that is, isomorphism classes of finitely generated projective $L(\Gamma)$-module under direct sum). The generator $v$ of $S(\Gamma)$ corresponds to the (right) projective $L(\Gamma)$-module $v L(\Gamma)$ \cite[Theorem 3.5]{amp07}, \cite[Section 4]{ag12}, based on \cite{ber74}. The corresponding relations among the isomorphism classes of the cyclic projective modules $vL(\Gamma)$ was shown to hold in the proof of Proposition \ref{loc}. We can now reinterpret  the existence of a nonzero finite dimensional representation in terms of the nonstable $K$-theory of $L(\Gamma, \Pi)$.
 
 \begin{theorem}
 $L(\Gamma, \Pi)$ has a nonzero finite dimensional representation if and only if $\Gamma$ has  a finite, full, cohereditary and colorful subgraph $\Lambda$ with a nonzero monoid homomorphism from $\mathcal{V} (L(\Lambda))$ to $\mathbb{N}$.
 \end{theorem}
 \begin{proof}
 $L(\Gamma, \Pi)$ has a nonzero finite dimensional representation if and only if $\Gamma$ has  a nonzero dimension function of finite support by Corollary \ref{cor}. The support of this dimension function defines a finite, full, cohereditary and colorful subgraph $\Lambda$ and its restriction gives 
 a nonzero dimension function on $\Lambda$ (hence, a nonzero monoid homomorphism from $\mathcal{V} (L(\Lambda))$ to $\mathbb{N}$).\\
 
 Conversely, a nonzero dimension function on $\Lambda$ can be extended by 0 to a dimension function on $\Gamma$ since $\Lambda$ is 
 cohereditary and colorful. This gives a nonzero dimension function of finite support on $\Gamma$.  
  \end{proof}

\section{Leavitt path algebras of non-separated digraphs}

Recall that  when $\Pi=\left\{ s^{-1}(v) \mid v \in V,    s^{-1}(v)\neq \emptyset \right\}$, we say that $\Gamma$ is not separated. The \textit{Leavitt path algebra } $L(\Gamma,\Pi)$ and the \textit{Cohn path algebra } $ C(\Gamma, \Pi)$ are denoted by $L(\Gamma)$ and $C(\Gamma)$. The conditions (SCK1) and (SCK2) are denoted by (CK1) and (CK2) respectively \cite{aa05}, \cite{amp07}. Since $e^*f=e^*(se)(sf)f$ by ($E$), using ($V$) the relation (SCK1) is shortened to: (CK1) $e^*f =\delta_{e,f} te$ for all $e,f \in E$. \\



For any arrow $e$ in $E$ we have $e^*e=te$ by (CK1). Consequently $p^*p=t(p)$ for any path $p$ of $\Gamma$. Hence for any two paths $p$ and $q$ of $\Gamma$ if $q=pr$ then $p^*q=p^*pr=r$, if $p=qr$ then $p^*q=(q^*p)^*=r^*$. 
Using (CK1) we see that $p^*q=0$ unless the path $q$ is an initial segment of the path $p$  ($p=qr$) or $p$ is an initial segment of $q$ ($q=pr$). Thus the Cohn path algebra $C(\Gamma)$ and the Leavitt path algebra $L(\Gamma)$ are spanned by $\left\{pq^*\right\}$ where $p$ and $q$ are paths of $\Gamma$ with $tp=tq$. In fact this is a basis for $C(\Gamma)$
which can be shown by defining an epimorphism from $C(\Gamma)$ to a reduced semigroup algebra $\mathbb{F} S/ \mathbb{F}\{0\}$ where $S=\{pq^* \vert p, q \in Path(\Gamma)\>, \> tp=tq \} \sqcup \{0\}$ with the multiplication of $S$ defined formally as above. In $L(\Gamma)$ however if $E\neq \emptyset$ then $\{pq^* : tp=tq \}$ is linearly dependent because of (CK2).\\



\begin{lemma} \label{tensorCohn}Let $\Gamma$ be a row-finite digraph and $M$ an $\mathbb{F}\Gamma$-module. Then\\
(i) $C(\Gamma)= \oplus_{q \in Path (\Gamma)} \mathbb{F}\Gamma q^*$.\\
(ii) $\mathbb{F}\Gamma q^*\cong \mathbb{F}\Gamma tq$ as  left $ \mathbb{F}\Gamma$-modules.\\
(iii) $M \otimes_{\mathbb{F}\Gamma}  \mathbb{F}\Gamma q^*\cong M \otimes_{\mathbb{F}\Gamma}  \mathbb{F}\Gamma tq \cong Mtq$ as vector spaces.\\
(iv) $M \otimes_{\mathbb{F}\Gamma} C(\Gamma)
= M \otimes_{\mathbb{F}\Gamma} (\oplus_{q \in Path (\Gamma)} \mathbb{F}\Gamma q^* )\cong \oplus_{q \in Path(\Gamma)} Mtq$.
\end{lemma}

\begin{proof} (i) Since an $\mathbb{F}$-basis for $C(\Gamma)$ is $\{pq^* \vert p,q \in Path(\Gamma) \>, \>  tp=tq \}$, therefore $C(\Gamma)=\oplus \mathbb{F}\Gamma q^*$.

(ii) The isomorphism is given by $\cdot q$ and its inverse is $\cdot q^*$.

(iii) The first isomorphism is a consequence of (ii), and the second is $m \otimes \alpha \mapsto m\alpha$ with inverse $m \mapsto m \otimes tq$ for $m \in Mtq$.

(iv) This is a consequence of (i), (ii) and (iii).
\end{proof}

\begin{fact} \label{resolution}
The exact sequence $0 \rightarrow   I \rightarrow C(\Gamma) \rightarrow L(\Gamma) \rightarrow 0$ where $I$ is the ideal of $C(\Gamma)$ is a graded (with respect to any grading of $\mathbb{F}\Gamma$ with $V\sqcup E$ homogeneous), projective resolution of $L(\Gamma)$ as a left $\mathbb{F}\Gamma$-module.
\end{fact}
\begin{proof}
Since a vertex $v$ is an idempotent in $\mathbb{F}\Gamma$, hence the left $\mathbb{F}\Gamma$-module $\mathbb{F}\Gamma v$ is projective. 
By Lemma \ref{tensorCohn} (i) and (ii) $C(\Gamma)\cong  \oplus_{q \in Path (\Gamma)} \mathbb{F}\Gamma tq$, so $C(\Gamma)$ is a projective left
$\mathbb{F} \Gamma$-module. \\

When $\Gamma$ is finite $I$ is a projective $\mathbb{F} \Gamma$-module because $\mathbb{F} \Gamma$ is a hereditary ring. If $\Gamma$ is row-finite then  $\mathbb{F} \Gamma$ should still be hereditary, but this fact does not seem to be available in the literature.  
We will give another proof which also yields a more concrete discription of $I$ as an $\mathbb{F} \Gamma$-module.\\

All elements of $I$ are of the form $\sum_{i=1}^k \alpha_i (v_i-\sum_{se=v_i} ee^*) \beta_i$ where $\alpha_i$, $\beta_i$ are in $C(\Gamma)$ (because $C(\Gamma)$ has local units). However $f^*(v-\sum_{se=v} ee^*)=0=(v-\sum_{se=v} ee^*)f$ for all $f \in E$. Hence $I= \sum \mathbb{F}\Gamma (tq-\sum ee^*)q^*$, in fact the sum is direct: \\

 Let $\sum_{i=1}^n \alpha_i (tq_i -\sum ee^*) q_i^*=0 $ in I where the $q_i$s are distinct paths of $\Gamma$, $\alpha_i$ in $\mathbb{F}\Gamma tq_i$ and $l(q_1)\geq l(q_2) \geq \cdots \geq l(q_n)$. When $i>1$ either $q_i^* q_1=0$ or it is in $Path(\Gamma) \setminus V$. Hence $0=\sum_{i=1}^n \alpha_i (tq_i -\sum ee^*) q_i^*q_1=\alpha_1 (tq_1-\sum ee^*)$. The projection of  $\alpha_1 (tq_1-\sum ee^*)$ in  $C(\Gamma)$ to the summand $\mathbb{F}\Gamma tq_1$ is $\alpha_1$, thus $\alpha_1=0$. 
Similarly we get that $\alpha_i=0$ also for $i=2, \cdots , k$.  Therefore, $I=\oplus \mathbb{F}\Gamma (tq-\sum ee^*) q^*$.\\

Since $v-\sum ee^*$ is an idempotent in $C(\Gamma)$ and  $(\oplus_{se=v} \mathbb{F}\Gamma e^* )+  \mathbb{F}\Gamma v$ is a projective $\mathbb{F}\Gamma$-module, 
$\mathbb{F}\Gamma (v-\sum ee^*)=((\oplus_{se=v} \mathbb{F}\Gamma e^* ) \oplus \mathbb{F}\Gamma v)(v-\sum ee^*)$ is projective. Hence 
$\mathbb{F}\Gamma (tq-\sum ee^*)q^* \cong \mathbb{F}\Gamma (tq-\sum ee^*)$ is also projective. Consequently, $I$ is a projective left $\mathbb{F}\Gamma$-module. The inclusion of $I$ into $C(\Gamma)$ and the projection from $C(\Gamma)$ to $L(\Gamma)$ are graded homorphisms (with respect to any $G$-grading such that all elements of $V \sqcup E$ are homogeneous) so $0 \rightarrow   I \rightarrow C(\Gamma) \rightarrow L(\Gamma) \rightarrow 0$ is a graded projective resolution.
\end{proof}

\begin{example}\label{rose}
 When $\Gamma$ is the rose with $n$ petals $R_n$, 
let $M=\rho(v):= \mathbb{F}^{(\mathbb{N})}$ and let $\rho(e_i): \rho (v) \longrightarrow \rho(v)$ be 
given by $(a_0a_1a_2 \cdots) \rho(e_i)=(a_ia_{i+n}a_{i+2n} \cdots)$ where $e_i$, $i=0,\cdots , n-1$ are the loops of $\Gamma$. Condition (I) is satisfied, $\rho(e_i)$ are the downsampling maps and this representation of $L(R_n)$ in $ End \> \mathbb{F}^{(\mathbb{N})}$ gives the realization of $L(1,n)$ mentioned in the introduction.
 \end{example}
      
 \begin{example}\label{w}
Let $\Gamma$ be a row-finite digraph, $w$ a sink in $\Gamma$ and $P^w$ the set of all paths 
ending at $w$. We will define the $L(\Gamma)$-module $M^w$ via  the corresponding quiver representation $\rho^w$ where $\rho^w(v)$ is the vector space with basis $\{p \in P^w  \> \vert \> sp=v \}$ and 
$\rho^w(e)$ is the linear transformation defined as \[
p\rho^w(e) :=\left\{
\begin{array}{ccc}
  w & \textit{ if } p=e \qquad \qquad \\
  e_2e_3 \cdots e_n&  \textit{ if } p=ee_2e_3 \cdots e_n   \\
  0 &  \textit{otherwise.}  \qquad \quad      
\end{array}
\right.
\]
 Grouping the paths from $v$ to $w$ by their first arrow gives Condition $(I)$ since for every nonsink $v$ there is a bijection
 between the disjoint union of the given bases of $\rho^w (te)$ over $e \in s^{-1}(v)$ and the given basis of $\rho^w(v)$.
 \end{example}

 $P^w$ is an $\mathbb{F}$-basis of $M^w$ and (the proof of Theorem \ref{teorem} shows that) $pe^*=ep$. Hence the image of $pq^*$ in $End^{\mathbb{F}}(M^w)$ with $p$, $q$ in $P^w$ is the elementary matrix $E_{pq}$, thus $M^w$ is simple. The two-sided ideal $(w)$ of $L(\Gamma)$ is spanned by $\{pq^* \> \vert \> p, q \textit{ in } P^w \} $ which is linearly independent in $L(\Gamma)$ since the image set $\{ E_{pq} \}$ is linearly independent. Therefore $(w) \cong M_{n(w)}(\mathbb{F})$, the algebra of matrices indexed by $P^w$ with only finitely many nonzero entries where $n(w)$ is the number of paths ending at $w$.\\
  
    Mapping $p \in P^w$ to $p^*$ defines a homomorphism from $M^w$ to $wL(\Gamma)$ which is onto: 
    $\{pq^* \> \vert \> sp=w, \> tp=tq \> \}$ spans $wL(\Gamma)$, but $w$ is a sink so $p=w=tq$ and $\{q^* \> \vert \> tq=w \}$ spans $wL(\Gamma)$. Since $M^w$ is simple and $wL(\Gamma)$ is nonzero by Proposition \ref{1-1}, $M^w \cong wL(\Gamma)$ thus $M^w$ is projective. If $N$ is a finite direct sum of $\{M^w\}$ then $dim^{\mathbb{F}} (Nu)$ is the multiplicity of $M^u$ for any sink $u$. Hence there are no relations among the isomorphism classes of distinct $M^w$. In particular if $u\neq w$ are sinks then $M^u \ncong M^w$. \\

Defining the grade of $p$ in $P^w$ to be $-l(p)$ makes $M^w$ a graded $L(\Gamma)$-module. Then $E_{pq}$ is a graded homogeneous linear transformation of degree $l(p)-l(q)$. If for every vertex $v$ in $\Gamma$ there is a path from $v$ to a sink then we have a monomorphism from $L(\Gamma)$ to $\oplus End^\mathbb{F} (M^w)$ where the sum is over all sinks of $\Gamma$ (because this is a graded homomorphism whose kernel does not contain any vertex). \\

  When $\Gamma$ is finite and acyclic then $End^{\mathbb{F}}(M^w) \cong M_{n(w)}(\mathbb{F})$
  and the homomorphism from $L(\Gamma)$ to $\oplus M_{n(w)}(\mathbb{F})$  is onto since all the elementary matrices are in its image. Acyclicity  of $\Gamma$ and (CK2) yields that $\{ pq^*\>  \vert \> tp=tq=  \textit{sink } \} $ spans $L(\Gamma)$. Their images $\{ E_{pq} \} $ are linearly independent, so $L(\Gamma) \cong \oplus M_{n(w)}(\mathbb{F})$. Thus $M^w$ are the only simple modules of $L(\Gamma)$ and also $L(\Gamma)$ is finite dimensional. Conversely, if $L(\Gamma)$ is finite dimensional then $\Gamma $ has finitely many vertices and arrows as these are part of a basis of $\mathbb{F}\Gamma \subseteq L(\Gamma)$. If $\Gamma$ had a cycle $C$ then $C^k $ for $ k=1,2, \cdots$ would be linearly independent in $ \mathbb{F}\Gamma \subseteq L(\Gamma) $ contradicting fact that $L(\Gamma)$ is finite dimensional. Hence $L(\Gamma)$ is finite dimensional if and only if $\Gamma$ is finite and acyclic  \cite[Corollary 3.6]{aas07}. \\

The discussion above applies verbatim, proving a generalization to infinite digraphs: 

\begin{proposition} If $\Gamma$ is row-finite and has no infinite paths 
then $L(\Gamma) \cong \oplus M_{n(w)}(\mathbb{F})$ where the sum is over all sinks. $($Here
$M_{n(w)}(\mathbb{F})$ is the algebra of matrices indexed by $P^w$ with only finitely many nonzero entries.$)$ Also $M^w \cong w L(\Gamma)$ is a graded simple projective module for every sink $w$. 

\end{proposition}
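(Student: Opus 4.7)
The plan is to mirror the strategy sketched in the preceding paragraphs for the finite acyclic case, generalizing the two finiteness-dependent steps to the row-finite setting with no infinite paths. Specifically, I would construct the natural homomorphism
$$\Phi:\; L(\Gamma)\;\longrightarrow\;\bigoplus_{w}End^{\mathbb{F}}(M^w)\quad(\text{sum over sinks }w)$$
given by letting $L(\Gamma)$ act on each $M^w$ from Example \ref{w}, and then show (a) its image lies inside $\bigoplus_{w}M_{n(w)}(\mathbb{F})$ (matrices indexed by $P^w$ with only finitely many nonzero entries), (b) $\Phi$ is surjective onto this sum, and (c) $\Phi$ is injective.

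First I would verify that every vertex $v$ of $\Gamma$ admits a path to some sink. Since $\Gamma$ is row-finite, the tree of finite paths starting at $v$ is locally finite; were $v$ unable to reach any sink, every finite path out of $v$ could be extended, so by K\"onig's lemma there would be an infinite path starting at $v$, contradicting the hypothesis. Combined with the earlier observation that $\Phi$ is a graded homomorphism, this ensures that the kernel of $\Phi$ contains no vertex.

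Next I would show that $\{pq^*\,|\,t(p)=t(q)\text{ is a sink}\}$ spans $L(\Gamma)$. Starting from the general spanning set $\{pq^*\,|\,t(p)=t(q)\}$ of $L(\Gamma)$, any $pq^*$ whose common target $v=t(p)=t(q)$ is not a sink can be rewritten via (CK2) as $pq^*=p\,v\,q^*=\sum_{e\in s^{-1}(v)}(pe)(qe)^*$, a finite sum by row-finiteness. Iterating this expansion terminates, because the K\"onig argument of the previous paragraph forces the lengths of paths emanating from any fixed vertex to be bounded; hence after finitely many expansions every summand has its common target at a sink.

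Finally, the images $\Phi(pq^*)=E_{pq}$ in the $t(p)$-component (for $p,q\in P^w$) form a linearly independent family across all sinks $w$, so the spanning set above maps to a basis of $\bigoplus_{w}M_{n(w)}(\mathbb{F})$. This simultaneously establishes surjectivity of $\Phi$ onto the direct sum, linear independence of $\{pq^*\,|\,t(p)=t(q)\text{ sink}\}$ in $L(\Gamma)$, and injectivity of $\Phi$, yielding $L(\Gamma)\cong\bigoplus_{w}M_{n(w)}(\mathbb{F})$. The statements that $M^w\cong wL(\Gamma)$ and that $M^w$ is a graded simple projective module have already been verified in Example \ref{w}, so no further work is needed there. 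The main technical obstacle is the termination of the (CK2)-expansion, which is precisely where the ``no infinite paths'' hypothesis enters; the remaining steps run parallel to the finite-dimensional argument already given.
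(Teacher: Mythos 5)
Your proof is correct and follows essentially the same route as the paper, which simply asserts that the finite acyclic argument ``applies verbatim.'' You supply the two pieces of bookkeeping that make that assertion literally true in the infinite setting: the K\"onig's lemma argument showing that ``row-finite with no infinite paths'' forces every vertex to reach a sink and bounds the lengths of paths out of any fixed vertex, which in turn guarantees (a) the monomorphism criterion (kernel of the graded map $\Phi$ contains no vertex) and (b) termination of the (CK2)-expansion of $pq^*$ into a finite sum with targets at sinks. These are precisely the details the paper elides, and your filling them in is the right move.
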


\begin{example} \label{Toeplitz} 
An important instance of Example \ref{w} is the Toeplitz digraph:\\
$ \Gamma : \> \> \xymatrix{  \> \> { \bullet}_{v} 
\ar@(ul,ur)^e } \xymatrix{ \stackrel{f}{\longrightarrow}  { \bullet}_{w} } \> $
\end{example}

 The basis given above of $M^w$ is $\{ w,\> f,\> ef,\> e^2f,\>  \cdots \}$ which can be identified with 
 $\mathbb{N}$ via the length function. Therefore $M^w \cong \mathbb{F}^{(\mathbb{N})}$ as vector spaces where $a_0w+a_1f+a_2ef+\cdots$ corresponds to $(a_0 \> a_1 \> a_2 \> \cdots )$, a finite $\mathbb{F}$-sequence. This representation of $L(\Gamma)$ also fits the framework of Proposition \ref{prop} with $d(v)=\infty$ and $d(w)=dim \>  M^ww=dim \> wL(\Gamma)w=1$. \\
 
    If $S$ and $T$ denote the images of $e+f$ and $e^*+f^*$ in $End \> \mathbb{F}^{(\mathbb{N})}$, respectively then $(a_0 \> a_1 \> a_2 \> \cdots )S= (a_1\> a_2 \> a_3 \> \cdots )$
   and $(a_0\>  a_1 \> a_2 \>\cdots )T= 
  (0\> a_0\>  a_1 \> a_2 \> \cdots )$.  We have:
 $v+w=1 \> $, $(e+f)(e^*+f^*) =ee^*+ff^*=v \>$, $(e+f)w=f \>$, $w(e^*+f^*)=f^*$ showing that $ e+f$
 and $ e^*+f^*$ generate $L(\Gamma)$.\\

Since $(e^*+f^*)(e+f)=e^*e+f^*f=v+w=1$, we have an epimorphism from the Jacobson \cite{jac50} algebra $\mathbb{F}\langle x,y \rangle :=\mathbb{F} \langle X,Y \rangle / (1-YX)$ to $L(\Gamma)$ sending $x$ to $e+f$ and $y$ to $e^*+f^*$. Composing this with the homomorphism from $L(\Gamma)$ to $End \> \mathbb{F}^{(\mathbb{N})}$ gives a monomorphism as $\{ x^my^n \> \vert \> m,\> n \in \mathbb{N} \}$ spans the Jacobson algebra and their images $\{ S^mT^n \> \vert \> m,\> n \in \mathbb{N} \}$ are linearly independent. Thus $L(\Gamma)$ is isomorphic to the Jacobson algebra and also the subalgebra of $End \> \mathbb{F}^{(\mathbb{N})}$ generated by $S$ and $T$. \\

$L(\Gamma)/(w) \cong \mathbb{F}[x,x^{-1}]$ since $w \leftrightarrow 1-xy$ in the isomorphism between $L(\Gamma)$ and  $\mathbb{F}\langle x,y \rangle$ above. 
The short exact sequence $M_\infty (\mathbb{F})\cong (w)  \hookrightarrow L(\Gamma) \twoheadrightarrow \mathbb{F}[x,x^{-1}]$ does not split \cite[Theorem 2]{aajz13}: If it were split then there would be a subalgebra $A$ of $End \> \mathbb{F}^{(\mathbb{N})}$ generated by $S+\alpha$ and $T+\beta$ isomorphic to $\mathbb{F}[x,x^{-1}]$ with $ x \leftrightarrow S+\alpha$ and $x^{-1}  \leftrightarrow T+\beta$, for some  $\alpha$ and $\beta$ with finite dimensional images. Considering $\mathbb{F}^{(\mathbb{N})}$ as a right $A\cong \mathbb{F}[x,x^{-1}]$-module we see that $S+\alpha$ and $T+\beta$ are inverses of each other. There is a $k$ with $\mathbb{F}^{(\mathbb{N})} \alpha \subseteq \mathbb{F}^k :=\{ a_0 a_1\cdots \> \vert \> a_n=0 \textit{ for } n\geq k \}$ and so $\mathbb{F}^{k+1}(S+\alpha) \subseteq \mathbb{F}^k$ because $\mathbb{F}^{k+1} S =\mathbb{F}^k$.
Thus $S+\alpha$ has a nontrivial kernel, contradicting that $S+\alpha$ is invertible.\\

 The short exact sequence $M_\infty (\mathbb{F}) \cong (w)  \hookrightarrow L(\Gamma) \twoheadrightarrow \mathbb{F}[x,x^{-1}]$ does not split as $L(\Gamma)$-modules either since $M_\infty (\mathbb{F})$ is not finitely generated. However, the inclusion $wL(\Gamma) \hookrightarrow L(\Gamma)$ does split since $v+w=1$, hence $vL(\Gamma) \oplus wL(\Gamma)=L(\Gamma)$. By Proposition \ref{loc}, $vL(\Gamma) \cong vL(\Gamma) \oplus wL(\Gamma) $ thus $L(\Gamma) \cong wL(\Gamma) \oplus L(\Gamma) \cong (wL(\Gamma))^n \oplus L(\Gamma)$ for any $n \in \mathbb{N}$. Therefore the category of finitely generated $L(\Gamma)$-modules does not have Krull-Schmidt because $wL(\Gamma)$ is simple by Example \ref{w}, hence indecomposable.  
(More generally, if $\Gamma$ is a finite digraph containing a cycle and a path from this cycle to a sink then the category of finitely generated representations of $L(\Gamma)$ does not have Krull-Schmidt.)\\
   
  When $\mathbb{F}= \mathbb{C}$ we can replace the vector space of finite $\mathbb{C}$-sequences   $\mathbb{C}^{(\mathbb{N})}$ with the Hilbert space $\mathit{l^2}$ of square summable sequences. Then $S$ and $T=S^*$ above are bounded operators (of norm 1) and 
  the closure with respect to the operator norm of the $*$-subalgebra $L(\Gamma)$ in the $C^*$-algebra of bounded linear operators $B(\mathit{l^2})$ generated by $S$ and $S^*$ is the classical Toeplitz algebra.

  \begin{example} $($Chen modules \cite{che15} $)$
  Let $\Gamma$ be a row-finite digraph, $\alpha=e_1e_2e_3 \cdots$ an infinite path and $[\alpha]$ the set of infinite paths $\beta =f_1f_2 f_3 \cdots $ having the same tail as $\alpha$ (that is, $f_{m+k}=e_{n+k}$ for all $k$ in $\mathbb{N}$, for some $m$ and $n$). 
  We will define the $L(\Gamma)$-module $M^\alpha$ via the quiver representation $\rho^\alpha$ as follows:
   $\rho^\alpha(v)$ is the $\mathbb{F}$-vector space with basis $\{\beta \in [\alpha] \> \vert \> s\beta=v \}$ and 
   $\rho^\alpha(e)$ is the linear transformation defined as  
  
  \[
\beta \rho^\alpha(e) :=\left\{
\begin{array}{cc}
  e_2e_3 \cdots &  \textit{ if } \beta=ee_2e_3 \cdots    \\
  0 &  \textit{ otherwise.}       
\end{array}
\right.
\]
 Grouping the paths starting at $v$ in $[\alpha]$ by their first arrow gives Condition (I). Thus $[\alpha]$ is a basis for $M^{\alpha}$ and the proof of Theorem \ref{teorem} shows that $\beta e^*=e\beta$ when $te=s\beta$ and 0 otherwise. Also $\beta pq^*=q \gamma $ if $\beta =p\gamma$ and 0 otherwise, implying that $M^\alpha$ is simple. 
\end{example}     

   There are two types of $M^\alpha$ depending on whether $\alpha$ is (eventually) periodic (that is, we can find $m$, $n$ so that $e_{k+n}=e_k$ for $k>m$)  or not. 
  When $\alpha$ is not periodic, $M^{\alpha}$ is a graded $L(\Gamma)$-module: the degree of $f_1f_2 f_3\cdots $ is $m-n$ where $m$ and $n$ are the positive integers satisfying $f_{m+k}=e_{n+k}$ for all $k \in \mathbb{N}$. If  $\alpha$ is periodic, picking the $n$, $m$ above smallest possible with $C:= e_{m+1} e_{m+2} \cdots e_{m+n}$, we get a bijection between the set of paths $P^C:= \{p \> \vert \> tp=sC, \> p  \textit{ does not end with } C \} $ and $[\alpha]$ given by $p \leftrightarrow p C^\infty$. Via this identification the image of $pC^l q^*$ with $l \in \mathbb{N} $ and $p,q \in P^C$ in $End^{\mathbb{F}} (M^\alpha ) $ is $E_{pq}$. \\

When $\Gamma$ is $R_n$, the rose with $n$ petals and $\alpha= e_0e_0e_0\cdots$ then $M^\alpha$ above is (isomorphic to) the module of Example \ref{rose}.

%

 




%

\section{Finite dimensional quotients of the Leavitt path algebra of a row-finite digraph}

In this section, $\Gamma$ will be a non-separated digraph, that is, $\Pi := \{ s^{-1}(v) \mid s^{-1}(v) \textit{ \em{nonempty} } \}$. In the non-separated context \textit{finitely separated} means row-finite. Recall that an $L(\Gamma)$-module is of \textit{finitary} if $dim^{\mathbb{F}}(Mv)< \infty $ for all $v \in V$. 
When $V$ is finite, finitary is the same as finite dimensional since $M =\oplus_{v \in V}  Mv$ by Lemma \ref{lemma}.
\begin{lemma} \label{exits}
If an $L(\Gamma)$-module $M$ is finitary then the cycles of its support subgraph $\Gamma_{_M}$ have no exits. 

\end{lemma}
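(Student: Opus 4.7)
The plan is to argue by contradiction using the dimension formula in Lemma \ref{lemma}. Assume $C = e_1 e_2 \cdots e_n$ is a cycle in $\Gamma_{_M}$ that has an exit $e$ of $\Gamma_{_M}$, and set $v_i := s(e_i)$ with indices read modulo $n$, so that $v_{i+1} = t(e_i)$ and $v_1 = t(e_n)$. Let $j$ be the (an) index with $s(e) = v_j$ and $e \neq e_j$. By Proposition \ref{rest.}, I view $M$ as a unital $L(\Gamma_{_M})$-module; its support is all of $V_{_M}$, so in particular $\dim M(te) > 0$ because $e \in E_{_M}$ forces $te \in V_{_M}$.

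Next, I apply Lemma \ref{lemma} at each vertex $v_i$ of the cycle. In the non-separated, row-finite setting the unique block $X \in \Pi_{\Gamma_{_M}}$ with $sX = v_i$ is the finite set $s_{\Gamma_{_M}}^{-1}(v_i)$, which contains $e_i$, so the lemma yields
\[ \dim M v_i \;=\; \sum_{f \in s_{\Gamma_{_M}}^{-1}(v_i)} \dim M(tf) \;\geq\; \dim M(te_i) \;=\; \dim M v_{i+1}. \]
All the dimensions are finite natural numbers because $M$ is of finite type, so these inequalities make sense. Chaining them around the cycle gives $\dim M v_1 \geq \dim M v_2 \geq \cdots \geq \dim M v_n \geq \dim M v_1$, which forces each inequality to be an equality.

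The contradiction lives at index $j$: the sum defining $\dim M v_j$ contains both the summand $\dim M(te_j) = \dim M v_{j+1}$ and the additional strictly positive summand $\dim M(te) \geq 1$ coming from the exit, so $\dim M v_j \geq \dim M v_{j+1} + 1 > \dim M v_{j+1}$, violating the equality just derived. There is no real obstacle; the only point requiring care is to ensure the extra summand is strictly positive, which is exactly what Proposition \ref{rest.} together with $e \in E_{_M}$ guarantees.
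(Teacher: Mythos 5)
Your proof is correct and follows essentially the same approach as the paper: you chain the dimension inequalities from Lemma \ref{lemma} around the cycle, force equality, and use the equality to rule out any extra summand at an exit vertex. The paper phrases the conclusion directly (the equality forces $\dim M(te) = 0$ for any $e$ with $s(e)=v_k$ and $te \neq v_{k+1}$, so no exit lies in the induced subgraph $\Gamma_{_M}$), whereas you run a contradiction and invoke Proposition \ref{rest.} to restrict to $L(\Gamma_{_M})$ first — but that restriction is not needed, since Lemma \ref{lemma} applied over $L(\Gamma)$ already gives $\dim Mv_i = \sum_{f \in s^{-1}_\Gamma(v_i)} \dim M(tf)$ and the same conclusion.
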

\begin{proof}
If $v_1,..., v_n, v_{n+1}=v_1$ are consecutive vertices in a cycle of $\Gamma_{_M}$ then $dim(Mv_1) \geq dim(Mv_2) \geq \cdots \geq dim(Mv_n)\geq dim( Mv_1)$ by Lemma \ref{lemma}. Hence $dim(Mv_k) =dim(Mv_{k+1})$ for $k=1, \cdots, n$. It follows from Lemma \ref{lemma} again that $Mte=0$ for $e\in s^{-1}(v_k)$
unless $te=v_{k+1}$. Thus cycles of $\Gamma_{_M}$ have no exits. 
\end{proof}\\

Next we characterize all possible finite dimensional quotients of the Leavitt path algebra of a row-finite digraph as direct sums of matrix algebras over finite dimensional cyclic algebras.

\begin{theorem} \label{teo1}
 If $A$ is a finite dimensional quotient of the Leavitt path algebra $L(\Gamma)$ of a row-finite digraph $\Gamma$ then $A\cong  \bigoplus\limits_{k=1}^{m} M_{n_k} (B_k)$, where each $n_k$ is a positive integer, $B_k = {\mathbb{F}\left[x\right]}/\left( P_k(x) \right)$ with $P_k(x)$ non-constant and $P_k(0)=1, \quad k=1,2,...,m.$
\end{theorem}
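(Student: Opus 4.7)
My plan is to view the finite-dimensional quotient $A$ as a right $L(\Gamma)$-module via the quotient map $\pi : L(\Gamma) \twoheadrightarrow A$ and exploit the support-subgraph machinery developed in Section 5. Because $\dim_{\mathbb{F}} A < \infty$, $A$ is finitary; the decomposition $A = \bigoplus_{v \in V} Av$ from Lemma \ref{lemma} forces $V_A := \{ v : Av \ne 0 \}$ to be finite, and Lemma \ref{exits} then gives that every cycle of the support subgraph $\Gamma_A$ has no exit. By Proposition \ref{rest.} and Remark \ref{remark}, the kernel of the canonical surjection $\theta : L(\Gamma) \twoheadrightarrow L(\Gamma_A)$ annihilates $A$ (and since $1 \in A$, $\ker\theta \subseteq \ker\pi$), so $A$ is already a quotient algebra of $L(\Gamma_A)$. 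I may therefore replace $\Gamma$ by $\Gamma_A$ and assume that $\Gamma$ is \emph{finite}, row-finite, and every cycle has no exit.

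Under these hypotheses, item (ii) of the list in the introduction---specifically \cite[Theorems 3.8 and 3.10]{aas08}---provides an isomorphism
\[
L(\Gamma) \;\cong\; \bigoplus_{w \text{ sink}} M_{n(w)}(\mathbb{F}) \;\oplus\; \bigoplus_{C \text{ cycle}} M_{n(C)}\bigl(\mathbb{F}[x, x^{-1}]\bigr),
\]
where $n(w)$ is the number of paths terminating at the sink $w$ (so that the sink block matches the ideal $(w) \cong M_{n(w)}(\mathbb{F})$ already exhibited in Example \ref{w}) and $n(C)$ is the number of paths terminating at a chosen vertex of $C$ without traversing $C$ in full. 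This reduces the classification of finite-dimensional quotients of $L(\Gamma)$ to that of each summand.

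The rest is algebraic bookkeeping. Ideals of a finite direct sum of unital algebras are direct sums of ideals of the summands, and ideals of $M_n(R)$ are precisely $M_n(J)$ for $J \trianglelefteq R$. Hence
\[
A \;\cong\; \bigoplus_{w \in \mathcal{W}'} M_{n(w)}(\mathbb{F}) \;\oplus\; \bigoplus_{C \in \mathcal{C}'} M_{n(C)}\bigl(\mathbb{F}[x, x^{-1}]/J_C\bigr)
\]
for some subsets $\mathcal{W}', \mathcal{C}'$ of the sinks and cycles and proper ideals $J_C \trianglelefteq \mathbb{F}[x, x^{-1}]$ (each sink block is simple, so it either survives whole or disappears). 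Finite dimensionality of $A$ forces each $\mathbb{F}[x,x^{-1}]/J_C$ to be finite dimensional; since $\mathbb{F}[x,x^{-1}]$ is a principal ideal domain in which $x$ is a unit, $J_C = (f_C)$ with $f_C(0) \ne 0$. Rescaling to $P_C := f_C / f_C(0)$ gives $P_C(0) = 1$ and $\mathbb{F}[x,x^{-1}]/(f_C) \cong \mathbb{F}[x]/(P_C(x))$. The surviving sink blocks $M_{n(w)}(\mathbb{F})$ are written in the same form via $\mathbb{F} = \mathbb{F}[x]/(1-x)$, and $P(x) = 1-x$ is non-constant with $P(0) = 1$. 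Re-indexing the surviving summands as $k = 1, \dots, m$ produces the asserted decomposition $A \cong \bigoplus_{k=1}^m M_{n_k}(B_k)$.

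The main technical input is the cited structure theorem \cite[Theorems 3.8 and 3.10]{aas08} invoked in the second step; the remaining ingredients---the support-subgraph reduction, the splitting of ideals across a direct sum of algebras, and the description of finite codimension ideals in $\mathbb{F}[x,x^{-1}]$---are routine, and I foresee no substantive obstacle beyond carefully tracking which summands survive the quotient.
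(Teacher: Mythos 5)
Your proposal follows the paper's proof essentially step for step: reduce to the finite support subgraph via Lemmas \ref{lemma}, \ref{exits}, Proposition \ref{rest.}, and Remark \ref{remark}; apply the structure theorem of \cite{aas08} for row-finite graphs with no exits on cycles; then classify the ideals of the resulting direct sum of matrix algebras over $\mathbb{F}$ and $\mathbb{F}[x,x^{-1}]$. The only cosmetic differences are that you phrase the reduction via $\ker\theta \subseteq \operatorname{Ann} A = \ker\pi$ while the paper reasons directly with the ideal $I_A$ generated by $I \cap V$, and you leave the ideal-splitting argument as a known fact where the paper spells it out with the projections $\pi_{ij}^k$ — but the substance is identical.
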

\begin{proof} 
If $A=L(\Gamma)/I$ is a finite dimensional quotient of $L(\Gamma)$ then $A$ is a unital $L(\Gamma)$-module. Its support subgraph $\Gamma_{A}$ is finite by Lemma \ref{lemma} and the cycles of $\Gamma_{A}$ have no exits by Lemma \ref{exits}. Let $I_{A}$ be the ideal generated by $V\setminus V_{A}= \{ v \in V \mid L(\Gamma)v =Iv \}=V\cap I $ as in Remark \ref{remark}. We have a homomorphism from $L(\Gamma_{A} ) \cong L(\Gamma)/I_{A}$ onto $L(\Gamma)/I=A$ (since $I_{A}$ is generated by $I\cap V$). So we may replace $\Gamma$ with $\Gamma_{A}$, a finite digraph whose cycles have no exits.\\

$L(\Gamma_{A})$ is isomorphic to a direct sum of matrix algebras over $\mathbb{F}$ and/or $~\mathbb{F}\left[x,x^{-1}\right]$ by \cite[Theorem 3.8 and 3.10]{aas08} 
(the number of summands of the form $M_n (\mathbb{F})$ is the number of sinks in $\Gamma_{A}$ and the number of summands of the form $M_n (\mathbb{F}\left[x,x^{-1}\right])$ is the number of cycles in $\Gamma_{A}$) and $A\cong {L(\Gamma_{A})}/J$. From now on we will identify $L(\Gamma_{A})$ with this direct sum of matrix algebras. \\

Let $\pi^{k}$ be the projection from $L(\Gamma_{A})$ to the $k$ th factor  $M_{n_k}(\mathbb{F})$ or $M_{n_k}(\mathbb{F}[x,x^{-1}])$ and $\pi_{ij}^k$ be $\pi^k$ composed with the projection to the $ij$-th entry. Note that multiplying on the left by $E_{li}$ and on the right by $E_{jm}$ in the $k$-th coordinate moves the $ij$-th entry to the $lm$-th entry, hence $\pi_{ij}^k(J)$ is independent of $ij$ (since $J$ is an ideal). If $J_k:=\pi_{ij}^k(J)$ then $J_k$ is an ideal of $\mathbb{F}$ or $\mathbb{F}\left[x,x^{-1}\right]$ and $J = \oplus M_{n_k}(J_k)$: 
We have $J \subseteq \oplus M_{n_k}(J_k)$ by the definition of the $J_k$.  To see the converse note that $\oplus M_{n_k}(J_k)$ is generated by $E_{ij}^k\alpha $ with $\alpha \in J_k$ where $E_{ij}^k$ denotes the element with $E_{ij}$ in the $k$-th coordinate and $0$ all the others. If $\alpha =\pi_{ij}^k (\beta)$ with $\beta \in J$ then $E_{ii}^k \beta E_{jj}^k =  E_{ij}^k\alpha$, thus $E_{ij}^k\alpha \in J$ and $\oplus M_{n_k}(J_k) \subseteq J$.\\

If $J_k \lhd \mathbb{F}$ then either $J_k=\mathbb{F}$, in which case the corresponding summand does not appear in $A$, or $J_k=0$ and the summand $M_{n_k}(\mathbb{F}) \cong M_{n_k} (B_k)$ where $B_k=F[x]/(x-1)$. If  $J_k \lhd F[x,x^{-1}]$ then either $J_k=\mathbb{F} [x,x^{-1}]$ so the corresponding summand does not appear in A, or $J_k= (P_k(x))$ and we may assume that $P_k$  is non-constant, $P_k \in F[x]$ and $P_k (0)=1$ (multiplying with a power of $x$ if necessary). Now $B_k:=F[x]/(P_k(x)) \cong F[x,x^{-1}]/ (P_k(x)) $ and $A \cong \oplus M_{n_k} ( \mathbb{F} [x,x^{-1}] ) / M_{n_k}(J_k) \cong \oplus M_{n_k} (B_k) $.
\end{proof}\\

As mentioned in the final paragraph of section 2 above, the graded ideals of $L(\Gamma)$ when $\Gamma$ is a row-finite digraph are in 1-1 correspondence with hereditary saturated subsets of vertices \cite[Theorem 5.3]{amp07}. This correspondence is given by sending a graded ideal $I$ to $V \cap I$ and its inverse sends a hereditary saturated subset $S$ of $V$ to $(S)$, the (graded) ideal generated by $S$. In particular, if $I$ is a graded ideal then $I= ( I\cap V)$. Consequently;\\
\begin{fact} \label{graded}
A graded ideal $I$ of $L(\Gamma)$ is nonzero if and only if $I\cap V\neq \emptyset$. Hence, a graded homomorphism $\varphi$ from $L(\Gamma)$ is one-to-one if and only if $\varphi(v)\neq 0$ for all $v \in V$. 
\end{fact}

If a grading is a refinement of another then an ideal graded with respect to the finer grading is clearly also graded with respect to the other. The converse holds for the universal grading and standard $\mathbb{Z}$-grading, i.e., an ideal is graded with respect to one if and only if with respect to the other (since $I$ is generated by $I\cap V$ which consists of homogeneous elements).\\

A subset $S$ of $V$ is hereditary if and only if the induced subgraph on its complement $V \setminus S$ is cohereditary. Also $S$ is saturated if and only if the induced subgraph on $V\setminus S$ is colorful for a non-separated digraph. Hence we can add a fourth equivalent condition to Lemma \ref{support} (for $\Gamma$ a row-finite digraph): There is a 1-1 correspondence between graded ideals $\{I\}$ of $L(\Gamma)$ and support subgraphs $\{ \Gamma_{_M}= (V_{_M},E_{_M} )\}$ given by $I=(V\setminus V_{_M}) $. Thus, for any ideal $I$ of $L(\Gamma)$ the unique maximal graded ideal $J$ contained in $I$ is $(V\setminus V_{_{L(\Gamma)/I}})$. Moreover the modules $L(\Gamma)/I$ and $L(\Gamma)/J$ have the same support subgraph.

\begin{corollary}
If $I$ is a graded ideal of $L(\Gamma)$ with $dim(L(\Gamma)/I)$ finite then $A:=L(\Gamma)/I$ is isomorphic to a direct sum of matrix algebras over $\mathbb{F}$.

\end{corollary}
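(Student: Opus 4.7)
The plan is to refine the proof of Theorem \ref{teo1} by exploiting the extra hypothesis that $I$ is graded, and to pin down which of the finite dimensional cyclic algebras $B_k = \mathbb{F}[x]/(P_k(x))$ can still arise. By Theorem \ref{teo1} we already know $A \cong \bigoplus M_{n_k}(B_k)$, so the task reduces to showing that each $B_k$ must equal $\mathbb{F}$.

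First I would reduce, exactly as in the proof of Theorem \ref{teo1}, to the case $\Gamma = \Gamma_A$ (a finite digraph whose cycles have no exits). The point is that the intermediate ideal $I_A$ appearing in Remark \ref{remark} is generated by vertices, which are homogeneous of degree zero, so $I_A$ is graded; since $I_A \subseteq I$ and $I$ is graded, the image $J$ of $I$ in $L(\Gamma_A) \cong L(\Gamma)/I_A$ is also graded. Next I would invoke \cite[Theorems 3.8 and 3.10]{aas08} to identify $L(\Gamma_A)$ with $\bigoplus M_{n_k}(C_k)$, where $C_k = \mathbb{F}$ for sink-summands and $C_k = \mathbb{F}[x,x^{-1}]$ for cycle-summands, and check that this is an isomorphism of $\mathbb{Z}$-graded algebras: the variable $x$ corresponds to traversing the exit-free cycle, so it is homogeneous of positive degree equal to the length of the cycle.

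Then I would use the decomposition $J = \bigoplus M_{n_k}(J_k)$ from the proof of Theorem \ref{teo1}. Since the projections $\pi^k$ and the matrix units $E_{ij}^k$ used to recover $J_k$ from $J$ are all homogeneous, each $J_k$ is a graded ideal of $C_k$. For the sink-summands this is automatic ($C_k = \mathbb{F}$ has no proper nonzero ideal). For the cycle-summands we face graded ideals of $\mathbb{F}[x,x^{-1}]$, and here the key observation is that $\mathbb{F}[x,x^{-1}]$ is a graded field: every nonzero homogeneous element is of the form $c x^n$ with $c \in \mathbb{F}^\times$, hence a unit. Consequently its only graded ideals are $0$ and the whole algebra. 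So either $J_k = 0$, in which case $A$ contains the whole infinite dimensional factor $M_{n_k}(\mathbb{F}[x,x^{-1}])$ — ruled out by $\dim A < \infty$ — or $J_k = \mathbb{F}[x,x^{-1}]$, in which case the cycle-summand disappears entirely in $A$. Thus only sink-summands $M_{n_k}(\mathbb{F})$ survive.

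The main thing requiring care is the bookkeeping in the second paragraph: verifying that the Abrams-Aranda Pino-Siles Molina isomorphism can be taken as $\mathbb{Z}$-graded (with the natural grading coming from the Leavitt path algebra structure), and that graded ideals of the direct sum $\bigoplus M_{n_k}(C_k)$ decompose as $\bigoplus M_{n_k}(J_k)$ with each $J_k$ graded in $C_k$. Once these compatibilities are in place, the graded-field property of $\mathbb{F}[x,x^{-1}]$ finishes the argument immediately.
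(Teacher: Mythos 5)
Your argument is correct, but it is noticeably longer than the paper's because it misses one key simplification that you had already set up. You observe that $I_A$ (the ideal generated by $V\setminus V_A$) is graded and $I_A\subseteq I$, and then you carry the (a priori nonzero) image $J$ of $I$ through the matrix-algebra decomposition of $L(\Gamma_A)$, invoking the graded-field property of $\mathbb{F}[x,x^{-1}]$ to pin down each $J_k$. But the discussion immediately preceding the corollary (the bijection between graded ideals and hereditary saturated vertex sets, \cite[Theorem 5.3]{amp07}) says more: a graded ideal is generated by the vertices it contains, so $I=(I\cap V)=(V\setminus V_A)=I_A$, i.e.\ $J=0$ and $A\cong L(\Gamma_A)$ on the nose. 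From there the paper simply notes that a finite dimensional $L(\Gamma_A)$ forces $\Gamma_A$ to be acyclic (powers of a cycle are linearly independent), so only matrix-algebra-over-$\mathbb{F}$ summands appear. Your route replaces this two-line finish with a careful verification that the \cite{aas08} isomorphism is $\mathbb{Z}$-graded (with $\deg x$ equal to the cycle length and the matrix units carrying nonzero degrees $l(p_i)-l(p_j)$), that the factor projections are homogeneous, and that each $J_k$ is therefore a graded ideal, after which the graded-field observation does indeed close the argument. All of these compatibility checks do hold, so nothing is actually wrong; the graded-field trick is a genuinely different and somewhat more robust mechanism (it would still work if one only knew $J$ were some graded ideal of $L(\Gamma_A)$ rather than zero), but in this situation it is extra machinery. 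The moral you should extract is that for a graded ideal $I$, the reduction to the support subgraph is an equality $A\cong L(\Gamma_A)$, not merely a surjection, and that fact alone finishes the proof.
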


\begin{proof}
When $I$ is graded $I=(V \setminus   V_{A})$ as explained above. There are no cycles in $\Gamma_{A}$ because $L(\Gamma_{A}) \cong L(\Gamma)/I$ is finite dimensional. Hence the only summands of $L(\Gamma)/I$ are matrix algebras over $\mathbb{F}$. 
\end{proof}\\



 
In order to state a necessary and sufficient criterion (in terms of the digraph $\Gamma$) for the existence of a nonzero finite dimensional quotient of $L(\Gamma)$ we need a few  definitions. 
We say $v$ \textit{connects to} $w$, denoted $v \leadsto w$, if there is a path $p$ in $\Gamma$ such that $sp=v$ and $tp=w$. This defines a preorder (reflexive and transitive relation) on the vertices of $\Gamma$. If $v$ and $w$ are on a cycle then $v \leadsto w$ and $w \leadsto v$. Let $U$ be the set of sinks and cycles of $\Gamma$. There is an induced preorder on $U$, also denoted by $\leadsto$. (This is a partial order on $U$ if and only if the cycles of $\Gamma$ are disjoint.) A sink or a cycle $u \in U$ is \textit{maximal} if $u' \leadsto u$ only if $u' =u$. \\

 The \textbf{\textit{predecessors}} of $v$ in $V$ is $V_{\leadsto v} := \{ w \in V \mid w \leadsto v\} $. If $u$ and $w$ are two vertices on a cycle $C$ then they have the same predecessors, so $V_{\leadsto C} $ is well-defined. Let $\Gamma_{\leadsto v}$ be the induced subgraph on $V_{\leadsto v}$ .
 
 \begin{theorem} \label{maximal}
Let $\Gamma$ be a row-finite digraph. $L(\Gamma)$ has a nonzero finite dimensional module (equivalently a nonzero finite dimensional quotient) if and only if $\Gamma$ has a maximal sink or cycle with finitely many predecessors.
\end{theorem}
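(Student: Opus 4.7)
My plan is to reduce to dimension functions via Corollary~\ref{cor} and prove both implications in that language. Recall that a dimension function $d\colon V\to \mathbb{N}$ satisfies $d(v)=\sum_{e\in s^{-1}(v)} d(te)$ for every non-sink $v$, and by row-finiteness these sums are finite.

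For the ``if'' direction, given a maximal sink or cycle $u$ with $V_{\leadsto u}$ finite, I would build a nonzero $d$ supported on $V_{\leadsto u}$. When $u=w$ is a sink, maximality makes $\Gamma_{\leadsto w}$ finite and acyclic (a cycle in it would connect to $w$), so $d(v):=\#\{\textrm{paths from }v\textrm{ to }w\}$ is a finite-valued dimension function (grouping paths by their first arrow gives the defining equation). When $u=C$ is a cycle, I would set $d\equiv 1$ on $C$, $d\equiv 0$ outside $V_{\leadsto C}$, and extend by the recursion $d(v)=\sum_{e\in s^{-1}(v)} d(te)$ on $V_{\leadsto C}\setminus C$ (well-defined since this set is finite and acyclic by maximality of $C$). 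The only subtle check is the equation at $v\in C$: I would argue that any exit $e$ of $C$ at $v$ must satisfy $te\notin V_{\leadsto C}$, because a path $te\leadsto C$ combined with the arc of $C$ from its landing point back to $v$ would produce a new cycle through $te$ connecting to $C$, contradicting maximality. Hence exits contribute $0$ and the equation $d(v)=1$ holds.

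For the ``only if'' direction, fix a nonzero dimension function $d$ of finite support $S$. Starting from any $v_0\in S$ and repeatedly picking an arrow to a target inside $S$ (possible at every non-sink of $S$ since $d(v_0)=\sum d(te)>0$ forces some $te\in S$), the finiteness of $S$ produces either a $\Gamma$-sink lying in $S$ or a cycle contained in $S$; call it $u_0$. Backward propagation along the dimension equation shows $V_{\leadsto u_0}\subseteq S$, so $u_0$ has finitely many predecessors. To see that $u_0$ is maximal, suppose $u'\in U$ with $u'\neq u_0$ and $u'\leadsto u_0$; since sinks have no outgoing arrows, $u'=C'$ is necessarily a cycle. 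The dimension equation forces $d$ to be constant on every cycle, and I split into two cases. If $C'$ shares a vertex with $u_0$ (forcing $u_0=C$ to be a cycle), then $C$ and $C'$ diverge at some common vertex $u^*$ which has two distinct outgoing arrows whose targets both carry $d$-value equal to $d(u^*)$, giving $d(u^*)\geq 2d(u^*)$ and hence $d(u^*)=0$, contradicting $u^*\in S$. If $C'$ is disjoint from $u_0$, the path $C'\to u_0$ has positive length and contains an exit of $C'$; either $C'\subseteq S$ (so exit targets have $d=0$ by constancy and the equation on $C'$) or $d\equiv 0$ on $C'$ to begin with, and in both cases forward propagation of $d=0$ along the path yields $d=0$ at a vertex of $u_0$, contradicting $u_0\subseteq S$.

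The main technical obstacle is the maximality step. A priori the preorder $\leadsto$ on $U$ need not be antisymmetric when $\Gamma$ has entangled cycles (sharing vertices or mutually reachable), so one cannot simply extract a maximal element by a finiteness argument. What saves the day is that entanglement of this kind is incompatible with the existence of a nonzero finite-support dimension function: constancy of $d$ on cycles combined with forward propagation of $d=0$ excludes precisely the problematic configurations, so the sink or cycle $u_0$ produced by the support walk is automatically maximal in the sense of the theorem.
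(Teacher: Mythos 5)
Your proof is correct and complete; it reaches the same statement by a somewhat different route than the paper. In the ``if'' direction the paper passes through Lemma~\ref{support}: it shows the induced subgraph $\Lambda$ on the (finite) predecessor set is full, cohereditary and colorful, concludes that $L(\Lambda)$ is a quotient of $L(\Gamma)$, and then uses the locally finite structure theorem of \cite{aas08} to identify $L(\Lambda)$ with $M_n(\mathbb{F})$ or $M_n(\mathbb{F}[x,x^{-1}])$. You instead build a nonzero finite-support dimension function directly (path-counting to the maximal sink; $d\equiv 1$ on the maximal cycle extended by the downstream recursion, with maximality annihilating the exit terms) and invoke only Corollary~\ref{cor}. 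That is more elementary and self-contained, bypassing the \cite{aas08} classification; what the paper's route buys is an explicit identification of the quotient algebra, which they also want for Theorem~\ref{teo1}. In the ``only if'' direction your argument and the paper's are essentially the same in two vocabularies: the support $S$ of your dimension function is exactly the vertex set $V_M$ of the support subgraph $\Gamma_M$ that the paper works with, your forward walk plus backward propagation re-derive the finiteness coming from Lemma~\ref{lemma}, and your two-case maximality analysis (constancy of $d$ on cycles, propagation of $d=0$ past any exit) is an inline re-proof of Lemma~\ref{exits} specialized to $u_0$, where the paper simply cites that lemma. Your closing remark, that failure of antisymmetry of $\leadsto$ on cycles cannot arise inside the support of a nonzero finite-support dimension function, is accurate and is precisely the content of Lemma~\ref{exits}.
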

\begin{proof}
Having a nonzero finite dimensional quotient is equivalent to having a nonzero finite dimensional module: Any quotient is also a module and conversely if $M$ is a nonzero finite dimensional $L(\Gamma)$-module then there is a nonzero homomorphism from $L(\Gamma)$ into $End(M)$ 
 whose image is finite dimensional.\\

If there is a nonzero finite dimensional quotient $L(\Gamma)/I$ let $M=L(\Gamma)/I$ and $\Lambda:=\Gamma_{_M}$,  its support subgraph. $\Lambda$ is a finite digraph (Lemma \ref{lemma}) whose cycles have no exits (Lemma \ref{exits}). If $\Lambda$ has a sink $w$ then $w$ is also a sink in $\Gamma$ because $\Lambda$ is colorful by Lemma \ref{support}. There is no path from any cycle in $\Gamma$ to $w$ since this cycle would be a cycle with an exit in $\Lambda$ (as $\Lambda$ is cohereditary). Hence $w$ is a maximal sink. If there is no sink then the finite digraph $\Lambda$ must have a cycle. This cycle has to be maximal, as above, otherwise $\Lambda$ would have a cycle with an exit. The predecessors of this maximal sink or cycle is contained in $\Lambda$ so it is finite.\\

Conversely, if $\Gamma$ has a maximal sink or cycle with finitely many predecessors  then the induced subgraph $\Lambda$ on this finite set $W$ of predecessors is full, cohereditary and colorful. So $L(\Lambda)$ is a quotient of $L(\Gamma)$ by Lemma \ref{support}. Moreover, there is at most one cycle in $\Lambda$ which has no exits. Thus $L(\Lambda) \cong M_n (\mathbb{F}) $ if there is no cycle or $L(\Lambda) \cong M_n (\mathbb{F}[x,x^{-1}])$ when there is a cycle (as in the proof of Theorem \ref{teo1} above). In both cases the finite dimensional algebra $M_n(\mathbb{F})$ can be realized as a quotient of $L(\Lambda)$ hence also of $L(\Gamma)$.
\end{proof}\\

\begin{remark}
Theorems 6.2 and 6.5 with some additional work  can yield a description of all finite dimensional indecomposable modules of Leavitt path algebras of (non-separated) row-finite digraphs. However, these are already classified in Theorems 4.4 and 4.9 of \cite{koc1} using a different approach. They are as follows.\\

There are two kinds of finite dimensional indecomposable unital $L(\Gamma)$-modules: If $M$ is of the first kind then $M$ is completely determined by a maximal sink $v$ with finitely many predecessors. The subspace $Mu$ for any vertex $u$ has dimension equal to the number of paths from $u$ to $v$ (hence the support of $M$ is the predecessors of $v$). The linear transformation given by an arrow $e$ is essentially a projection (corresponding to the injection from the set of paths $te$ to $v$ to the set of paths $se$ to $v$ by appending $e$ to the beginning of each path). These indecomposables are simple. \\

An indecomposable $M$ of the second kind is determined by a maximal cycle $C$ with finitely many predecessors, an irreducible $f(x) \in \mathbb{F}[x]$ with $f(0)=1$ and a positive integer $n$. Now, $Mv\cong \mathbb{F}[x]/ (f(x)^n)$ for any vertex $v$ on $C$. The linear transformation given by one of the arrows on $C$ corresponds to multiplication by $x$, the remaining arrows on $C$ give the identity transformation (isomorphism type of $M$ is independent of which arrow corresponds to multiplication by $x$). For any vertex $u$, $Mu$ is isomorphic to a direct sum of copies of $Mv$ indexed by the paths from $u$ to $v$, which do not traverse $C$. The arrows that are not on the cycle $C$ give projections, as above. These indecomposables are simple if and only if $n=1$.
  \end{remark}
  
For a  ring $R$ with 1, the UGN (Unbounded Generating Number) property is: if $R^m \cong R^n \oplus P$ as $R$-modules then 
$m\geq n$. Equivalently, $R$ does not have UGN if and only if $R^{m+1}$ is a quotient of $R^m$ (up to isomorphism) for some $m$. 
We define the \textit{non-UGN type} of $R$ to be the smallest such $m$. For a non-UGN Leavitt path algebra we prove below Corollary \ref{ugn} that its type is always 1.  Corollary \ref{ugn} also provides a different proof of the characterization of the UGN property for Leavitt path algebras \cite[Theorem 3.16]{anp} \\

Clearly, UGN implies IBN. Also the existence of a nonzero finite dimensional quotient implies UGN (by a dimension count after tensoring with this quotient). 
The UGN property of $L(\Gamma)$  is characterized in terms of $\Gamma$ in \cite[Theorem 3.16]{anp} for a finite digraph $\Gamma$. Even though their characterization is expressed quite differently, it is not difficult to see that it is equivalent to the existence of a maximal sink or a maximal cycle. $L(\Gamma)$ has UGN if and only if it is algebraically amenable by \cite[Remark 3.17 ]{anp}. This is a consequence of Corollary 5.11 in \cite{allw18} where the concept of algebraic amenability was introduced.

\begin{corollary} \label{ugn}
Let $\Gamma$ be a finite digraph. Then $\Gamma$ has a maximal sink or a maximal cycle if and only if $L(\Gamma)$ has  UGN if and only if $L(\Gamma)$ is algebraically amenable if and only if $L(\Gamma)$ has a nonzero finite dimensional quotient if and only if $L(\Gamma)\oplus L(\Gamma)$ is not isomorphic to a quotient module of $L(\Gamma)$.
\end{corollary}
\begin{proof}
If $\Gamma$ has a maximal sink or a maximal cycle then $L(\Gamma)$ has a nonzero finite dimensional representation by Theorem \ref{maximal},
hence $L(\Gamma)$ has UGN as explained above.\\

When $\Gamma$ does not a maximal sink or a maximal cycle let $U$ be the set of vertices in $\Gamma$ lying on at least two cycles and let 
$P=\oplus_{u\in U} uL(\Gamma)$.  Since $L(\Gamma)=\oplus_{v \in V}vL(\Gamma)$ we see that $P$ is a quotient module
 of $L(\Gamma)$.\\
 
 Using the isomorphisms $uL(\Gamma) \cong \oplus_{se=u} teL(\Gamma)$ of Proposition \ref{loc} repeatedly, for each $u\in U$ we can express $uL(\Gamma)$ as a direct sum with at least two of the summands being $uL(\Gamma)$, since $u$ lies on multiple cycles. Hence $P\oplus P$ is quotient of $P$.\\

If $w$ is a sink then $w$ is a descendant of a cycle since there are no maximal sinks. If $v \in V$ is on a cycle which meets another cycle then 
 $v$ is a descendant of some $u \in U$. If $v$ is on a cycle which is disjoint from all other cycles then $v$ is a descendant of 
 a different cycle since there are no maximal cycles. Repeating this if necessary we get that $v$ is a descendant of some $u$ in $U$ because $\Gamma$ is finite. Thus all sinks and all vertices on a cycle are descendants of $U$.\\
 
 For each $v\in V$ all paths starting at $v$ eventually reach a sink or a cycle. 
 Thus using $vL(\Gamma) \cong  \oplus_{se=v} teL(\Gamma)$ repeatedly we can express $vL(\Gamma)$ as a direct sum with summands $wL(\Gamma)$ where each $w$ is a descendant of $U$. If $w$ is a descendant of $u$ in  $U$ then $wL(\Gamma)$ is a quotient of $uL(\Gamma)$ 
 as above. Hence $vL(\Gamma)$ and also $L(\Gamma)=\oplus vL(\Gamma)$ are quotients of $P^k$ for some $k$.\\
 
Now $P$ is a quotient of $L(\Gamma)$, also $P\oplus P$ hence $P^{2k}$ are quotients of $P$ and $L(\Gamma)\oplus L(\Gamma)$ is a quotient of 
$P^{2k}$. Therefore if $\Gamma$ has no maximal sinks or maximal cycles then $L(\Gamma)\oplus L(\Gamma)$ is a quotient of $L(\Gamma)$.
\end{proof}

\begin{remark}
For every positive integer $m$ there are infinitely many non isomorphic algebras of non-UGN type $m$, namely the Leavitt algebras $L(m,n)$ 
for all $n >m$. These algebras can be realized as the corner algebras $wLw$ of separated Leavitt path algebras $L=L(\Gamma_{m,n})$ where $\Gamma_{m,n}$ is the digraph with two vertices $u$, $w$ and $m+n$ arrows from $u$ to $w$ separated into a part of $m$ and another part of $n$
 arrows \cite[Proposition 2.12 (1)]{ag12}. Computing the non-stable $K$-theory monoid $\mathcal{V} (wLw)$ gives: 
 (i) Every finitely generated projective $wLw$ is free. 
 (ii) All isomorphisms between finitely generated projective $wLw$-modules are consequences of $(wLw)^m \cong (wLw)^n$. Consequently, 
 $(wLw)^{k+1}$ is a quotient of  $(wLw)^k$ if and only if $k\geq m$.
  \end{remark}

   \begin{corollary} \label{reversible}
 We have the following implications for the Leavitt path algebra of a finite digraph, neither  of which is reversible: $L(\Gamma)$ has finite Gelfand-Kirillov dimension implies that $L(\Gamma)$ has a nonzero finite dimensional quotient implies that $L(\Gamma)$ has IBN.
 \end{corollary}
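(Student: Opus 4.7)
The plan is to derive each of the two implications from results already established in the paper and then exhibit counterexamples to their converses.

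For the first implication, I would invoke item (iii) of the Introduction: finite Gelfand--Kirillov dimension of $L(\Gamma)$ is equivalent to the cycles of $\Gamma$ being mutually disjoint. Under this hypothesis the preorder $\leadsto$ on the finite set of sinks and cycles of $\Gamma$ is antisymmetric, hence a partial order. Since a finite digraph always contains at least one sink or cycle (following outgoing arrows from any vertex must either terminate at a sink or eventually revisit a vertex, producing a cycle), this poset is nonempty and therefore has a maximal element. Its predecessor set is contained in the finite vertex set $V$, hence is finite, and Theorem \ref{maximal} then produces a nonzero finite dimensional quotient.

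For the second implication I would argue that any nonzero finite dimensional $\mathbb{F}$-algebra $A$ has IBN because $\dim_\mathbb{F}(A^n)=n\dim_\mathbb{F}(A)$ is injective in $n\ge 1$; a unital surjection $L(\Gamma)\twoheadrightarrow A$ then transfers IBN upward, since tensoring any purported isomorphism $L(\Gamma)^m\cong L(\Gamma)^n$ with $A$ over $L(\Gamma)$ yields $A^m\cong A^n$ and forces $m=n$. Equivalently, in the monoid-theoretic language of Section~5, the finite dimensional module $A$ induces a nonzero monoid homomorphism $S(\Gamma)=\mathcal V(L(\Gamma))\to\mathbb{N}$ sending $[L(\Gamma)]$ to the positive integer $\dim_\mathbb{F}A$, which is incompatible with any relation $m[L(\Gamma)]=n[L(\Gamma)]$ for $m\neq n$.

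For the non-reversibility of the first implication I would take $\Gamma=R_2\sqcup\{w\}$, the disjoint union of the rose with two petals and an isolated vertex: the two loops share the rose's vertex, so the cycles of $\Gamma$ are not mutually disjoint and $L(\Gamma)$ has infinite GK dimension, while the isolated sink $w$ is maximal (no cycle connects to it) with finite predecessor set $\{w\}$, so Theorem \ref{maximal} supplies a nonzero finite dimensional quotient. For the non-reversibility of the second implication one must produce a finite digraph $\Gamma$ whose Leavitt path algebra has IBN but admits no maximal sink and no maximal cycle, so that Theorem \ref{maximal} precludes any nonzero finite dimensional quotient; this is the main obstacle, because IBN only asserts rigidity of the class $[L(\Gamma)]$ under integer multiples inside the graph monoid $S(\Gamma)$, which is a strictly weaker property than admitting a nonzero monoid homomorphism $S(\Gamma)\to\mathbb{N}$ nonvanishing on $[L(\Gamma)]$, and separating these two conditions in a concrete finitely presented graph monoid requires a careful construction of $\Gamma$.
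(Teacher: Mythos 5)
Your arguments for the two implications match the paper's proof. For the first, you invoke \cite{aajz12} (item (iii) in the Introduction) to get disjoint cycles, observe that $\leadsto$ is then a partial order on the nonempty finite set of sinks and cycles, extract a maximal element, and apply Theorem \ref{maximal}; the paper does exactly this, and you usefully spell out the existence of a sink or cycle in a finite digraph. For the second, you argue both that a finite dimensional unital algebra has IBN by the dimension count and that this transfers along a unital surjection, which is the paper's argument, and your monoid-theoretic restatement via $S(\Gamma) \to \mathbb{N}$ is a valid reformulation. Your first counterexample $R_2 \sqcup \{w\}$ differs from the paper's connected digraph $\Gamma_1$ (a loop at $v$, an arrow $v\to u$, two loops at $u$), but it is equally correct: the two loops of $R_2$ are not disjoint so GK dimension is infinite, while the isolated vertex $w$ is a maximal sink with predecessor set $\{w\}$.

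The genuine gap is the non-reversibility of the second implication: you correctly identify that what is needed is a finite digraph with no maximal sink and no maximal cycle whose Leavitt path algebra nevertheless has IBN, and you correctly observe that IBN is a strictly weaker monoid condition than admitting a homomorphism $S(\Gamma)\to\mathbb{N}$ nonvanishing on $[L(\Gamma)]$; but you stop short of exhibiting the example and verifying IBN, which is exactly the substantive content of this half. The paper takes $\Gamma_2$ consisting of two loops at a vertex $v$ and two arrows from $v$ to a sink $u$. Both loops and the sink $u$ are reachable from a cycle, so there is no maximal sink or cycle and Theorem \ref{maximal} rules out any nonzero finite dimensional quotient. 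IBN is then checked via the Kanuni--\"{O}zayd\i n criterion \cite{ko}: the single defining relation of $S(\Gamma_2)$ is $v = 2v + 2u$, with coefficient vector $(1,2)$, and $(1,1)$ is not in the $\mathbb{Q}$-span of $(1,2)$; alternatively, one notes $L(\Gamma_2)$ is isomorphic to the Cohn path algebra of $R_2$ and invokes \cite{ak} that Cohn path algebras have IBN. Without an explicit construction and an IBN verification along one of these lines, your proof of the corollary is incomplete.
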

 
 \begin{proof}
   If $L(\Gamma)$ has finite Gelfand-Kirillov dimension then the cycles in $\Gamma$ are disjoint \cite[Theorem 5]{aajz12}. Thus $\Gamma$ must have a maximal sink or a maximal cycle and by Theorem \ref{maximal}, $L(\Gamma)$ has a nonzero finite dimensional quotient. If $L(\Gamma)$ has a nonzero finite dimensional quotient then $L(\Gamma)$ has IBN (since finite dimensional unital algebras have IBN and if  a unital ring does not have IBN then neither does any nonzero homomorphic image of it).\\
     
    The examples below show that neither implication is reversible:\\
 
$$ \Gamma_1 : \quad  \xymatrix{  &\> \> { \bullet}_{v} \ar[r]
\ar@(ul,ur)   &{ \bullet}_{u} 
\ar@(ul,ur) \ar@(dl,dr) } 
 $$\\
 
There is no path to the loop at $v$ from any other cycle, hence $L(\Gamma_1)$ has a nonzero finite dimensional quotient. But the Gelfand-Kirillov dimension is infinite since the loops at $u$ are not disjoint.\\

To see that the second implication is not reversible consider the digraph $\Gamma_2$ below:

$$ \Gamma_2 : \quad  \xymatrix{  &\> \> { \bullet}_{v} 
\ar@(ul,ur) \ar@(dl,dr)  \ar@/^1pc/[r] \ar[r]  & {\bullet}_{u}  } $$ \\
$L(\Gamma_2)$ has no nonzero finite dimensional quotient (both cycles and the sink are reachable from another cycle). But $L(\Gamma_2)$ has IBN by the criterion of Kanuni-Özaydın \cite{ko}: The only relation we have is $v=2v+2u$, yielding $(1,2)$. Then $L(\Gamma_2)$ has IBN since $(1,1)$ is not in the $\mathbb{Q}$-span of $(1,2)$. Another way to see that $L(\Gamma_2)$ has IBN is to note that $L(\Gamma_2)$ is isomorphic to a Cohn path algebra (of the rose with 2 petals). Cohn path algebras have IBN \cite{ak}.
\end{proof}

    \section{$\mathfrak{M}_{\mathbb{F}\Gamma}$ versus $\mathfrak{M}_{L(\Gamma)}$  }
    
 $\mathfrak{M}_{L(\Gamma)}$ is a full subcategory of  $\mathfrak{M}_{\mathbb{F}\Gamma}$  by Proposition \ref{teorem} and $\mathfrak{M}_{L(\Gamma)}$ is a retract of  $\mathfrak{M}_{\mathbb{F}\Gamma}$  by Theorem \ref{forgetful}, that is,   
 the composition of the forgetful functor from $\mathfrak{M}_{L(\Gamma)}$ to $\mathfrak{M}_{\mathbb{F} \Gamma}$ with  $\underline{\>\>\> } \otimes_{\mathbb{F}\Gamma} L(\Gamma)$ from $\mathfrak{M}_{\mathbb{F} \Gamma}$ to $\mathfrak{M}_{L(\Gamma)}$
is naturally equivalent to the identity functor on $\mathfrak{M}_{L(\Gamma)}$. Thus,  $\mathfrak{M}_{L(\Gamma)}$ is also a quotient of $\mathfrak{M}_{\mathbb{F} \Gamma}$. In this section we identify explicitly the Serre subcategory of $\mathfrak{M}_{\mathbb{F} \Gamma}$ that we quotient out and 
along the way we realize the functor $\>\> \underline{\> \>\>} \otimes_{\mathbb{F}\Gamma} L(\Gamma)$ via a direct limit construction.\\

  We will need the following generalization of (CK1) and (CK2) which requires a definition: Let $E_n:=\{ p\in Path(\Gamma)\vert l(p)=n \textit{ or } l(p)< n \textit{ and } tp \textit{ is a sink }\} $ and $E_n^v := \{ p \in E_n \vert sp=v \}$ for $n \in \mathbb{N}$. In particular,  $E_0=V$ and $E_1=\mathcal{S} \sqcup E$ where $\mathcal{S}$ is the set of sinks in $\Gamma$. If $w$ is a sink then $E_n^w=\{w \}$ for all $n \in \mathbb{N}$.
  
 \begin{lemma} \label{CK+} For all $n \in \mathbb{N}$ and $v \in V$\\
 (i) $p^*q=\delta_{p,q}tp$ for all $p,\>q \in E_n$;\\
 (ii) $\{pp^* \vert p \in E_n \} $ is a set of orthogonal idempotents;\\
 (iii) $v=\sum pp^*$ where the sum is over $ p \in E_n^v$.
  \end{lemma}
 \begin{proof}
(i) $p^*q\neq 0$ if and only if $p$ is an initial segment of $q$ or $q$ is an initial segment of $p$. This is possible only if $p=q$ because either $l(p)=n=l(q)$ or $tp$ or $tq$ is a sink. Also $p^*p=tp$.\\
(ii) This follows directly from (i).\\
(iii) For $n=0$ this says $v=v$. For $n >0$ this follows from repeated applications of (CK2) until $l(p)=n$ or $tp$ is a sink. 
 \end{proof}\\

To understand the functor $\>\> \underline{\> \>\>} \otimes_{\mathbb{F}\Gamma} L(\Gamma) : \mathfrak{M}_{\mathbb{F}\Gamma} \longrightarrow \mathfrak{M}_{L(\Gamma)}$ better we will give an alternate model for $M \otimes L(\Gamma)$.
 If $M$ is an $\mathbb{F}\Gamma$-module then we define the $\mathbb{F}\Gamma$-modules $\sigma^k M$ for all $k \in \mathbb{N}$ as a quiver representation: $(\sigma^k M)v:= \bigoplus_{p \in E_k^v} Mtp$  for all $v\in V$. To define the linear transformation given by $e \in E$ from $(\sigma^k M)se = \bigoplus Mtp$ to $(\sigma^k M)te=\bigoplus Mtq$ 
we focus on a single block $Mtp \longrightarrow Mtq$. This is defined to be zero unless $p= ep'$ and $q=p'f$ with $f \in E_1^{tp}$, in which case it is 
right multiplication by $f$. Note that $\sigma^0 M=M$ and $\sigma^k (\sigma^l M)=\sigma^{k+l} M$ for $k,\> l $ in $\mathbb{N}$.  For an $\mathbb{F}\Gamma$-module $N$ we have a module homomorphism $\theta_N : N \longrightarrow  \sigma N$ given by $Nv \stackrel{(\cdot f)}{\longrightarrow} \oplus_{f \in E_1^v} Mtf=(\sigma N)v$. We get a directed system $M \stackrel{\theta_M}{\longrightarrow} \sigma M  \stackrel{\theta_{\sigma(M)}}{\longrightarrow} \sigma^2 M \longrightarrow \cdots $. We also have $\mathbb{F}\Gamma$-module homomorphisms
from $\sigma^k M$ to $M \otimes L(\Gamma)$ given by $m \mapsto m\otimes p^*$ for $m\in Mtp \subseteq \sigma^k M$ yielding a commutative triangle \[
\left.
\begin{array}{ccc}
 \sigma^kM  & \stackrel{\theta_{\sigma^kM}}{\longrightarrow}   &  \sigma^{k+1}M \\
 &   \searrow& \downarrow  \\
  &   &M\otimes L(\Gamma)   
\end{array}
\right.
\]
We get a homomorphism from the direct limit $colim \> \sigma^k M$ to $M\otimes L(\Gamma)$.\\

  \begin{theorem} \label{isomorphism}
 If $M$ is an $\mathbb{F}\Gamma$-module then $colim\> \sigma^kM$ is an $L(\Gamma)$-module naturally isomorphic to $M\otimes_{\mathbb{F}\Gamma} L(\Gamma)$.
  \end{theorem} 
 \begin{proof}
 The homomorphism from  $colim\> \sigma^kM$ to $M \otimes L(\Gamma)$ was defined above. In the opposite direction we want to show that the linear transformation from $M\otimes L(\Gamma)$ to $colim \sigma^kM$ sending $m \otimes p^*$ to $[m]$ where $m \in Mtp \subseteq \sigma^{l(p)} M$ is well-defined. Since $\{pq^* \> \vert \> p,q \in Path(\Gamma) \>, \>  tp=tq \> \}$ is a basis for $C(\Gamma)$ and $C(\Gamma)=\oplus_{v \in V} vC(\Gamma)$ we have $M\otimes C(\Gamma) \cong \oplus_{q \in Path(\Gamma)} Mtq$.  
Thus $m \otimes p^* \mapsto [m]$ is a well-defined linear transformation from $M \otimes C(\Gamma)$ to $colim \sigma^k M$. Since $m \otimes (v-\sum_{se=v} ee^*)q^* \mapsto 0$ for every non sink $v \in V$ the linear transformation 
 $M \otimes L(\Gamma) \longrightarrow colim \sigma^kM$ is defined. Since these homomorphisms are inverses of each other we have 
 $M \otimes_{\mathbb{F}\Gamma} L(\Gamma) \cong colim \sigma^kM$ and the isomorphism is natural, i.e., for any $\mathbb{F}\Gamma$-module homomorphism $f:M \longrightarrow N$ we get a commutative diagram \[
\left.
\begin{array}{ccc}
  colim \sigma^kM &  \stackrel{colim \sigma^k f}{\longrightarrow}  &  colim \sigma^kN \\
 \cong  \downarrow &   & \downarrow \cong   \\
  M\otimes L(\Gamma)&  \stackrel{f\otimes id_{L(\Gamma)}}{\longrightarrow}   &N\otimes L(\Gamma)   
\end{array}
\right.
\]
The $\mathbb{F}\Gamma$-module $colim \sigma^kM$ satisfies condition (I) of Proposition \ref{teorem} since it is isomorphic to 
the $L(\Gamma)$-module $M\otimes L(\Gamma)$. Thus $colim \sigma^kM$ and $M\otimes L(\Gamma)$ are isomorphic 
as $L(\Gamma)$-modules. 
\end{proof}\\

 When $\Gamma$ is finite the fact that $L(\Gamma)$ is a flat $\mathbb{F}\Gamma$-module is proven in \cite[Proposition 4.1]{ab10}.
 Below we give a different proof for a row-finite $\Gamma$ using Theorem \ref{isomorphism}.  This is not a consequence of $L(\Gamma)$
 being a localization of $\mathbb{F}\Gamma$ (Proposition \ref{loc}) since universal localizations are not necessarily even stably
 flat \cite[Lemma 1.4]{nrs04}.

\begin{lemma} \label{flat}
 $L(\Gamma)$ is a flat left $\mathbb{F}\Gamma$-module.
 \end{lemma}
\begin{proof}
We need to check that $\iota \otimes id_{L(\Gamma)} $ is one-to-one where  $\iota :  A \hookrightarrow B$ is the inclusion of an $\mathbb{F}\Gamma$-submodule.  We will use the model $colim \sigma^k \underline{\> \> \>}\> $ for $\> \underline{\> \> \>} \otimes L(\Gamma)$. If $\alpha \in colim \sigma^k A$ is in $Ker(\iota \otimes id_{L(\Gamma)})$ then $ \alpha = [\alpha_k]$ for some $\alpha_k \in \sigma^k A$ and $k \in \mathbb{N}$. Since $[\alpha_k]=0$ in $colim \sigma^k B$, we have $n\geq k$ with 
$\alpha_k \mapsto 0$ in $ \sigma^n B$. Then   $\alpha_k \in \sigma^k A \subseteq \sigma^k B$ maps to $\alpha_n=0 \in \sigma^n A$ by naturality. Hence $\alpha=[\alpha_n]=0$, i.e., $\underline{\> \> \>} \otimes L(\Gamma)$ is left exact.
\end{proof}\\

\begin{example}
Let $ \Gamma$ be $\quad  \xymatrix{{\bullet} \ar@(ul,ur)}  \quad$ and $\Lambda$ be $\quad  \xymatrix{  { \bullet}
\ar@(ul,ur) } \xymatrix{ \longrightarrow  { \bullet}}$ so $\mathbb{F}\Gamma \cong \mathbb{F}[x] $ and $L(\Gamma)\cong \mathbb{F}[x,x^{-1}] $. Also $C(\Gamma)\cong  \mathbb{F}<x,y> / (1-yx) \cong L(\Lambda)$ and $I \cong M_{\infty}(\mathbb{F})$, the algebra of matrices indexed by $\mathbb{N}$ with only finitely many nonzero entries. The ideal $I$ in $L(\Lambda)$ is generated by the sink in $\Lambda$. The projective resolution 
 $0 \rightarrow   I \rightarrow C(\Gamma) \rightarrow L(\Gamma) \rightarrow 0$ of Fact \ref{resolution} does not split (over $\mathbb{F}\Gamma$) because $\mathbb{F}[x,x^{-1}]$  is not a projective $\mathbb{F}[x]$-module (since $Hom^{\mathbb{F}[x]}( \mathbb{F}[x,x^{-1}], \mathbb{F}[x])=0). $
\end{example}


When $M$ is an $\mathbb{F}\Gamma$-module,  let $\widehat{M} =\{ m\in M \vert   \> \> \exists n\in \mathbb{N} \> \> \forall p \in E_n \>\> \> mp=0 \}$. If $ m \in \widehat{M}$ then $mq \in \widehat{M}$ for all $q \in Path(\Gamma)$, hence $\widehat{M}$ is a submodule of $M$. Also, if $f: M \rightarrow N$ is an $\mathbb{F}\Gamma$-module homomorphism and $m \in \widehat{M}$ then $f(m) \in \widehat{N} $ 
so  $\> \widehat{\> \> \> \> \>  }\> $ is an endofunctor on $\mathfrak{M}_{\mathbb{F}\Gamma}$ with $\widehat{f}:= f \vert_{\widehat{M}}$.
The inclusions $\widehat{M} \hookrightarrow M$ define a natural transformation from  $\>  \widehat{ \> \> \> \> \> } \> $ to the identity functor. If $A$ is a submodule of $M$ then $\widehat{A} =A \cap \widehat{M}$, therefore  $\> \widehat{\> \> \>  \> \> }\> $ is left exact.

 \begin{lemma} If $M$ is an $L(\Gamma)$-module then the following are equivalent:\\
(i)  $\widehat{M}=0$; \\
(ii) $(\cdot e)_{se=v} : Mv \longrightarrow  \oplus_{se=v} Mte$ for all nonsink $v\in V$ is one-to-one;\\
(iii) $(\cdot p)_{p\in E_n^v} : Mv \longrightarrow  \oplus_{sp=v} Mtp$ for all $v \in V$ and for all $n \in \mathbb{N}$ is one-to-one.
\end{lemma}
\begin{proof}
$(i) \Rightarrow (ii) : \quad Ker( (\cdot e)_{se=v}) \subseteq \widehat{M}$. \\
$(ii) \Rightarrow (iii) :$ Since $E_1$ is the union of $E$ and all sinks in $\Gamma$,  this follows from $(E_1)^n=E_n$ and the fact that composition of one-to-one functions is one-to-one.\\
$(iii) \Rightarrow (i) :$ This is immediate from the definition of $\widehat{M}$.
 \end{proof}

\begin{theorem} \label{kernel}
$\widehat{M}$ is the kernel of $M \longrightarrow colim \sigma^kM$, equivalently the kernel of the composition $M \cong M \otimes_{\mathbb{F}\Gamma} \mathbb{F}\Gamma \stackrel{id_M \otimes \iota}{\longrightarrow} M \otimes_{\mathbb{F}\Gamma} L(\Gamma)$.
\end{theorem}
\begin{proof}
First we will prove this assuming $\widehat{M}=0$. The homomorphism $M \longrightarrow \sigma^kM$ is one-to-one if $\widehat{M}=0$. Then $colim \sigma^kM$ can be identified with $\cup \sigma^kM$. Hence $M \longrightarrow colim \sigma^kM$ has kernel $0=\widehat{M}$. Note that $\widehat{ M/ \widehat{M}}=0$.  
Now, consider the commutative square 
\[
\left.
\begin{array}{ccc}
  M & \longrightarrow   &M/ \widehat{M}  \\
 \downarrow  &   &\downarrow   \\
  M \otimes L(\Gamma) & \longrightarrow   &   M/ \widehat{M}\otimes L(\Gamma) 
  \end{array}
\right.
\]
If $m \in Ker(M \longrightarrow M \otimes L(\Gamma))$ then $m \in \widehat{M}$ since  $M/ \widehat{M}  \longrightarrow M/ \widehat{M}\otimes L(\Gamma)$ is one-to-one as shown above.
 \end{proof}

\begin{theorem} \label{hat} Let $f:A \longrightarrow B$ be a homomorphism of $\mathbb{F}\Gamma$-modules. Then $f \otimes_{\mathbb{F}\Gamma} id_{L(\Gamma)}=0$ if and only if $f(A) \subseteq \widehat{B}$.
 \end{theorem}
\begin{proof}
If $f(A) \subseteq \widehat{B}$ then for all $v \in V$ and for all $a \in Av$ there is a $k \in \mathbb{N}$ such that $f(a)p=0$ for all $p \in E_k^v$. Now $f(a) \otimes v= f(a)\otimes \sum pp^*=\sum f(ap)\otimes p^*=0$ where the sum is over all $p$ in $E_k^v$. 
Also $f(a) \otimes \lambda=f(av)\otimes \lambda= (f(a)\otimes v) \lambda =0$ for all $\lambda$ in  $L(\Gamma)$. Since $A =\oplus_{v \in V} Av$, we get that $f \otimes id_{L(\Gamma)}=0$.\\

Conversely,  if $f \otimes_{\mathbb{F}\Gamma} id_{L(\Gamma)}=0$ then in the commutative square 
\[
\left.
\begin{array}{ccc}
  A&\stackrel{f}{\longrightarrow}    & B  \\
  \downarrow &   & \downarrow  \\
  A\otimes L(\Gamma) & \stackrel{0}{\longrightarrow}   &   B \otimes L(\Gamma)
\end{array}
\right.
\]
$f(A) \subseteq Ker (B \longrightarrow B\otimes L(\Gamma))=\widehat{B}$ by Theorem \ref{kernel}.
\end{proof}
\begin{corollary} \label{sapka}
$M\otimes L(\Gamma)=0$ for an $\mathbb{F}\Gamma$-module $M$ if and only if $M =\widehat{M}$.
\end{corollary}
\begin{proof}
If $M =\widehat{M}$ then the inclusion $\widehat{M} \hookrightarrow M $ is $id_M$ and $id_M(\widehat{M})\subseteq \widehat{M}$ hence $id_{M \otimes L(\Gamma)}=id_M \otimes id_{L(\Gamma)}=0$ by Theorem \ref{hat}. Thus $M\otimes L(\Gamma)=0$.\\

Conversely, if $colim \sigma^kM \cong M\otimes L(\Gamma)=0$ then $[m]=0$ in $colim \sigma^kM$ for all $m \in M$. Hence $m \in Ker (M\longrightarrow \sigma^n M)$ for some $n \in \mathbb{N}$. Thus $m \in \widehat{M}$ and $M=\widehat{M}$.
\end{proof}

\begin{corollary}
Let $M$ be an $\mathbb{F}\Gamma$-module and let $\iota :\mathbb{F}\Gamma \longrightarrow L(\Gamma)$ be the standard algebra homomorphism. 
If  for all nonsink $v \in V $  $$\xymatrix{ Mv \ar[r]^{(\cdot e)} & \bigoplus\limits_{se=v} Mte }$$  
 is one-to-one then $\xymatrix{ M \cong M\otimes \mathbb{F}\Gamma \ar[r]^{\> \> id_M \otimes \iota} &M\otimes L(\Gamma)}$ is one-to-one. 
\end{corollary}
\begin{proof}
Since composition of one-to-one functions is again one-to-one, the hypothesis implies that $\xymatrix{ Mv \ar[r]^{(\cdot p)} & \bigoplus\limits_{p \in E_n^v} Mtp }$ is one-to-one for all $n \in \mathbb{N}$. Hence $\widehat{M}=0$ and $M \longrightarrow M\otimes L(\Gamma)$ is one-to-one by Theorem \ref{kernel}.
  \end{proof}


\begin{theorem} \label{Serre}
The full subcategory $\widehat{\mathfrak{M}}_{\Gamma}$ of quiver representations  $\mathfrak{M}_{\mathbb{F}\Gamma}$ with objects $M=\widehat{M}$ is a Serre subcategory. The quotient category $\mathfrak{M}_{\mathbb{F}\Gamma}/ \widehat{\mathfrak{M}}_{\Gamma} $ is equivalent to $\mathfrak{M}_{L(\Gamma)}$.
\end{theorem}
\begin{proof}
If $A$ is a $\mathbb{F}\Gamma$-submodule of $B$ then $\widehat{A}= \widehat{B} \cap A$ and  if $f$ is a $\mathbb{F}\Gamma$-module homomorphism then $f(\widehat{B}) \subseteq \widehat{f(B)}$. It follows that $\widehat{\mathfrak{M}}_{\Gamma}$ is closed under subquotients.
If $A$ is a submodule of $B$ with $\widehat{A}=A$ and $\widehat{B/A}= B/A$ then for $b \in Bv$ there is $n$ such that $bp \in A$  for all $p \in E_n^v$ since $\widehat{B/A}= B/A$. Also for each $bp$ there is $n_p$ such that $bpq=0$ for all $q \in E_{n_p}^{tp}$. Let  $k=max\{{n_p}\> \vert\>  p \in E_n^v \}$. Now $bp=0$ for all $p \in E_{n+k}^v$ hence $\widehat{B}=B$ and $\widehat{\mathfrak{M}}_{\Gamma}$ is a Serre subcategory since it is a full subcategory closed under sub quotients and extensions. \\

Recall that the quotient category $\mathfrak{M}_{\mathbb{F}\Gamma}/ \widehat{\mathfrak{M}}_{\Gamma} $ has the same objects as $\mathfrak{M}_{\mathbb{F}\Gamma}$. The morphisms are $colim \>  Hom(A' , B/B')$ where the direct limit is over ordered pairs $(A', B')$ such that  $A' \leq A$ with $\widehat{A/A'}=A/A'$ and $B' \leq B$ with $\widehat{B'}=B'$. The functor from $\mathfrak{M}_{L(\Gamma)}$ to $\mathfrak{M}_{\mathbb{F}\Gamma}/ \widehat{\mathfrak{M}}_{\Gamma} $ is the forgetful functor from  $\mathfrak{M}_{L(\Gamma)}$  to  $\mathfrak{M}_{\mathbb{F}\Gamma}$ composed with the quotient functor to $\mathfrak{M}_{\mathbb{F}\Gamma}/ \widehat{\mathfrak{M}}_{\Gamma} $. \\

The functor from $\mathfrak{M}_{\mathbb{F}\Gamma}/ \widehat{\mathfrak{M}}_{\Gamma} $ to $\mathfrak{M}_{L(\Gamma)}$ is induced by $ \underline{\> \>\>} \otimes_{\mathbb{F}\Gamma} L(\Gamma)$. The inclusion $A' \hookrightarrow A$ and the quotient $B \longrightarrow B/B'$ give isomorphisms $A\otimes_{\mathbb{F}\Gamma} L(\Gamma) \cong A' \otimes_{\mathbb{F}\Gamma} L(\Gamma)$ and 
 $B/B' \otimes_{\mathbb{F}\Gamma} L(\Gamma) \cong B\otimes_{\mathbb{F}\Gamma} L(\Gamma)$ when $\widehat{A/A'}=A/A'$ and $\widehat{B'}=B'$ by Lemma \ref{flat} and Corollary \ref{sapka}. The image of a morphism $[A' \stackrel{f}{\longrightarrow} B/B']$
 is defined via the isomorphisms $A\otimes_{\mathbb{F}\Gamma} L(\Gamma) \cong A' \otimes_{\mathbb{F}\Gamma} L(\Gamma)$ and 
 $B/B' \otimes_{\mathbb{F}\Gamma} L(\Gamma) \cong B\otimes_{\mathbb{F}\Gamma} L(\Gamma)$. Well-definedness follows from  the naturality 
 of these isomorphisms. \\

 The composition of the two functors above from $\mathfrak{M}_{L(\Gamma)}$ to itself is naturally  equivalent to the identity functor on $\mathfrak{M}_{L(\Gamma)}$ by Theorem \ref{forgetful}. The natural transformation from the identity functor on $\mathfrak{M}_{\mathbb{F}\Gamma}/ \widehat{\mathfrak{M}}_{\Gamma} $ to the other composition is given by (the equivalence class of) :
 $M \cong M\otimes_{\mathbb{F}\Gamma} \mathbb{F}\Gamma \longrightarrow M\otimes_{\mathbb{F}\Gamma} L(\Gamma)$, more explicitly 
$m=\sum m_iv_i \mapsto \sum m_i \otimes v_i$ for all $m \in M$.
This defines an isomorphism in $\mathfrak{M}_{\mathbb{F}\Gamma}/ \widehat{\mathfrak{M}}_{\Gamma} $ because its kernel is $\widehat{M}$ (by Theorem \ref{kernel}) and its cokernel is also an object of $\widehat{\mathfrak{M}}_{\Gamma}$: Given $\sum_{i=1}^k m_i^*q_i^*$ in $M\otimes L(\Gamma)$ if $n=max \{l(q_i)\}_{i=1}^k$ then $(\sum m_i^*q_i^*)p$ is in the image of $M$ for all $p\in E_n$. Since both compositions are naturally equivalent to the identity functors, the categories $\mathfrak{M}_{\mathbb{F}\Gamma}/ \widehat{\mathfrak{M}}_{\Gamma} $ and  $\mathfrak{M}_{L(\Gamma)}$ are equivalent. 
\end{proof} \\

\textbf{Acknowledgements}\\

Ayten Ko\c{c} was supported by TUBITAK (The Scientific and Technological Research Council of Turkey)  BIDEB 2219-International Post-Doctoral Research Fellowship during her visit to the University of Oklahoma where some of this research was done. She would like to thank her colleagues at the host institution for their hospitality. Both authors were also partially supported by TUBITAK ARDEB 1001 grant 115F511.
The authors would also like to thank an anonymous referee for a careful reading of this manuscript, several suggestions improving the exposition
and making us aware of the reference \cite{allw18} which lead to a strengthening of Corollary 6.6.




\noindent
$^*$ Department of Mathematics, \\ Gebze Technical University,  41400 Gebze/Kocaeli, TURK\.{I}YE\\ 
E-mail: aytenkoc@gtu.edu.tr\\

\noindent
$^{**}$ Department of Mathematics, \\ University of Oklahoma, Norman, OK, USA \\
E-mail: mozaydin@ou.edu

\begin{thebibliography}{27}

\bibitem{abr15} G. Abrams, Leavitt Path Algebras: The First Decade, Bull. Math. Sci. (2015) 5:59-120.

\bibitem{aas} G. Abrams, P. Ara, M. Siles Molina, Leavitt Path Algebras, Lecture Notes in Mathematics Vol. 2191, Springer Verlag, 2017.

\bibitem{aa05} G. Abrams, G. Aranda Pino, The Leavitt Path Algebra of a Graph, Journal of Algebra 293 (2) (2005) 319–334.


\bibitem{aaps} G. Abrams, G. Aranda Pino, F. Perera, M. Siles Molina, Chain Conditions for Leavitt Path Algebras, Forum Mathematicum 22 (1) (2010) 95-114.


\bibitem{aas07} G. Abrams, G. Aranda Pino, M. Siles Molina, Finite-dimensional Leavitt Path Algebras, Journal of Pure and Applied Algebra 209 (2007) 753–762.

\bibitem{aas08} G. Abrams, G. Aranda Pino, M. Siles Molina, Locally Finite Leavitt Path Algebras, Israel Journal of Mathematics 165 (2008) 329 - 348.

\bibitem{ak} G. Abrams, M. Kanuni, Cohn Path Algebras Have IBN, Communications in Algebra 44 (2016) 371-380.
\bibitem{amp17} G. Abrams, F. Mantese, A. Tonolo, Leavitt Path Algebras are Bezout, arXiv:1605.08317v1 [math.RA], https://arxiv.org/abs/1605.08317.
\bibitem{anp} G. Abrams, T. G. Nam, N. T. Phuc, Leavitt Path Algebras Having Unbounded Generating Number, Journal of Pure and Applied Algebra 211 (2017) 1322-1343.

\bibitem{ar10} G. Abrams, K. M. Rangaswamy, Regularity Conditions for Arbitrary Leavitt Path Algebras, Algebras and Representation Theory 13 (3) (2010) 319-334.


 
  \bibitem{aajz12} A. Alahmedi, H. Alsulami, S. Jain, E. Zelmanov, Leavitt Path Algebras of Finite Gelfand-Kirillov Diemension J. Algebra Appl. 11 (6) (2012) 1250225, 1-6.

\bibitem{aajz13} A. Alahmedi, H. Alsulami, S. Jain, E. Zelmanov, Structure of Leavitt Path Algebras of  Polynomial Growth, Proceedings of the National Academy of Sciences USA 110 (2013) 15222-15224.
	
\bibitem{ab10} P. Ara, M. Brustenga, Module Theory over Leavitt Path Algebras and K-theory, J. Pure Appl. Algebra 214 (2010) 1131-1151.


\bibitem{ag12} P. Ara, K.R. Goodearl, Leavitt Path Algebras of Separated Graphs, J. Reine Angew. Math. 669 (2012) 165-224. 

\bibitem{allw18} P. Ara, K. Li, F. Lledo, J. Wu, Amenability of Coarse Spaces and $\mathbb{K}$-algebras, Bull. Math. Sci. (2018) 8:257-306.


\bibitem{amp07} P. Ara, M.A. Moreno, E. Pardo, Nonstable  $\mathbb{K}$-theory for Graph Algebras, Algebras and Representation Theory 10 (2007) 157--178.

\bibitem{ar14} P. Ara, K.M. Rangaswamy,  \textit{Finitely Presented Simple Modules over Leavitt Path Algebras}, J. Algebra 417 (2014) 333-352.

%
%

\bibitem{ber74} G.M. Bergman, Coproducts and Some Universal Ring Constructions, Trans. Amer. Math. Soc. 200 (1974) 33-88.


\bibitem{che15} X.W. Chen, Irreducible Representations of Leavitt Path Algebras, Forum Mathematicum 27 (2015) 549-574.

\bibitem{dw05} H. Derksen, J. Weyman, An Introduction to Quiver Representations, GSM, Vol. 184, AMS,  2017.

\bibitem{goo09}K. R. Goodearl,  Leavitt Path Algebras and Direct Limits in Rings and Representations, Contemporary Math. Series 480, Amer. Math. Soc., Providence, RI (2009) 165-188.






\bibitem{jac50} N. Jacobson, Some Remarks on One-sided Inverses, Proc AMS 1 (1950) 352-355.






\bibitem{ko} M. Kanuni, M. Özaydın, Leavitt Path Algebras  and the IBN Property, arXiv:1606.07998v1 [math.RA], to appear in Journal of Algebra and its Applications.

\bibitem{koc1} A. Ko\c{c}, M. Özaydın, Finite Dimensional Representations of Leavitt Path Algebras, Forum Mathematicum, 30 (4), 915-928, (2018).

\bibitem{kegk14} A. Ko\c{c}, S. Esin, \.{I}. G\"{u}lo\u{g}lu, M. Kanuni, A Combinatorial Discussion on Finite Dimensional Leavitt Path Algebras, Hacettepe Journal of Mathematics and Statistics 43 (6) (2014) 943-951.



\bibitem{lea62} W. G. Leavitt, The Module Type of a Ring, Transactions AMS, 103 (1962) 113-130.


\bibitem{nrs04} A. Neeman, A. Ranicki, A. Schofield, Representations of Algebras as Universal Localizations, Math. Proc. 
Camb. Phil. Soc. 136 (2004) 105-117.




\end{thebibliography}
\end{document}